\documentclass{article}
\usepackage{graphicx}%
\usepackage{multirow}%
\usepackage{amsmath,amssymb,amsfonts}%
\usepackage{amsthm}%
\usepackage{mathrsfs}%
\usepackage[title]{appendix}%
\usepackage{xcolor}%
\usepackage{textcomp}%
\usepackage{manyfoot}%
\usepackage{booktabs}%
\usepackage{algorithm}%
\usepackage{algorithmicx}%
\usepackage{algpseudocode}%
\usepackage{listings}%
\usepackage{mathtools}
\usepackage{comment}
\usepackage{hyperref}
\usepackage{caption}
\usepackage[hmargin={26mm, 26mm}, vmargin={15mm, 25mm}]{geometry}
\theoremstyle{thmstyleone}%
\newtheorem{theorem}{Theorem}
\theoremstyle{thmstyletwo}%
\newtheorem{remark}{Remark}%

\theoremstyle{thmstylethree}%
\newtheorem{definition}{Definition}%
\usepackage{empheq}
\newenvironment{simultempheq}[2]{
        \subequations \label{#2}
        
        \empheq[left = \empheqlbrace \, ]{#1}
}{
        \endempheq
        \endsubequations
}
\let\Re\relax
\DeclareMathOperator{\Re}{Re}

\begin{document}
\title{\rightline{\normalsize{Preprint.}}
\vskip2\baselineskip
Dynamical Systems in Elliptical Pursuit and Evasion
\author{Sota Yoshihara$^{1}$ \\
\small \ $^{1}$Graduate School of Mathematics, Nagoya University, Japan\\ 
\small email:~sota.yoshihara.e6@math.nagoya-u.ac.jp\\
\small WWW homepage:~https://researchmap.jp/yoshiharasota}
}
\date{}
\maketitle
\abstract{
This paper investigates the difference between the circular and elliptical cases in one-on-one pursuit and evasion problems. Using the simultaneous differential equation derived by Barton and Eliezer, we derive a dynamical system based on the assumption that the shape of the pursuer's trajectory is unaffected by the evader's speed. The dynamical system involves the angular difference between the velocity vectors of the players and their separation distance. When the evader orbits a circle, the dynamical system is autonomous with an asymptotically stable equilibrium point. By contrast, if the evader orbits an ellipse, the dynamical system becomes non-autonomous and lacks an equilibrium point. To handle the singularity at capture, we reformulate the system using a complex variable that includes information about the logarithmic distance and the angular difference. We establish two main results: when the pursuer is faster than the evader, the pursuer captures the evader in finite time, and we derive an explicit upper bound for the capture time; when the pursuer is slower, the system possesses a unique periodic solution to which all trajectories converge globally and asymptotically.  }\\
\textbf{Keywords}: Dynamical System, Pursuit and Evasion, Stability, Non-autonomous system. \\
\textbf{MSC Classification}: 49N75, 34C05, 34C25, 34D20, 34D23, 37C27, 37C35, 91A24

\section{Introduction}
Pursuit and evasion problems have broad applications in areas such as autonomous vehicle control, modelling of predator--prey interactions, and space mission design. This paper gives a rigorous dynamical-systems perspective on how the geometry of the evader's path determines the qualitative behavior of pursuit.

We especially focus on the classical one-on-one pursuit and evasion problem in a two-dimensional plane. Pursuit problems have a long history dating back to Pierre Bouguer in 1732 \cite{Bouguer}, A. S. Hathaway  in 1921 \cite{Hathaway2} and subsequent work by Bernhart \cite{Bernhart} and many others has produced a rich body of results on pure pursuit curves. In the research on pursuit and evasion problems, most studies focus on optimizing the strategies of the players. Isaacs' seminal work on differential games \cite{Isaacs} established the framework for analyzing optimal strategies, which has been extended to many-to-many scenarios \cite{Weintraub,Sergey}. A related line of research considers the case where the evader is constrained to move along a given curve. Azamov \cite{Azamov} studied escape strategies on prescribed curves, and Kuchkarov \cite{Kuchkarov} derived necessary and sufficient conditions for pursuit completion when the evader moves along a given curve. These studies primarily address the question of whether capture is possible in finite time, rather than the qualitative shape of the pursuer's trajectory.

In this paper, we analyze the \emph{long-term shape} of the pursuer's trajectory under three conditions determining both players' strategies:
\begin{enumerate}
    \item The evader follows specific paths, and its trajectory is unaffected by the pursuer. Also, it does not stop or turn back.
    \item The pursuer's velocity vector constantly points toward the evader's position.
    \item The ratio of the pursuer's speed to the evader's speed remains constant. 
\end{enumerate}
Barton and Eliezer formulated this problem using a simultaneous differential equation in 2000 \cite{Barton}. However, well before their research, the case of the evader following a circular trajectory had already been studied. In 1921, Hathaway found three properties of this problem \cite{Hathaway2}. First, if the pursuer's speed is greater than the evader's, the pursuer captures the evader. Second, if both players' speeds are the same, the pursuer orbits the same circle as the evader. Third, if the pursuer's speed is $n$ times the evader's speed ($n<1$), the pursuer's trajectory converges to a circle reduced by a factor of $n$ (See Fig. \ref{Fig-circular}). These properties are confirmed by simulating the trajectories computationally \cite{Ohira,Nahin}. In addition, Rozas \cite{Rozas} addressed the pursuit curve problem for objects undergoing uniformly accelerated motion, and presented, as one application example, the case in which the evader moves along an elliptical path. 
\begin{figure}[h]
  \centering
  \includegraphics[width=0.40\columnwidth]{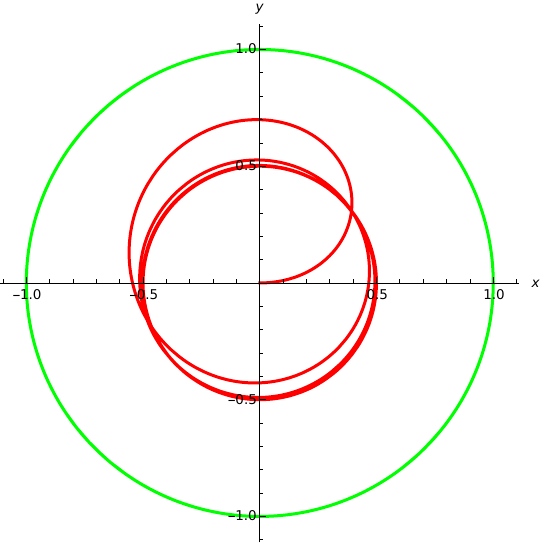}
  \caption{Circular one-on-one pursuit and evasion problem when $n=0.5$. The evader orbits a circle with radius one from (1, 0). The pursuer starts from the origin. The pursuer's trajectory converges to a circle with a radius $n=0.5$. The red and green lines are the pursuer's and evader's trajectories, respectively \cite[Figure 1(a)]{CCP2023pb}}
  \label{Fig-circular}
 \end{figure}
 
Our prior studies \cite{Sota,CCP2023pb,CCP2023} also have reported the pursuer's trajectory when the evader orbits an ellipse. In these papers, computational simulation data showed that the first and second of the three properties Hathaway found in the circle case were held, but not the third. More specifically the pursuer's trajectory converges to an unknown closed curve instead of an ellipse reduced by a factor of $n$ when $n<1$ (See Fig. \ref{Fig-Elliptical}). Initially, we hypothesized that altering the ellipse's parameters would change the pursuer's trajectory because the evader's speed and angular velocity were not constant in that simulation. However, additional experimental data showed that the pursuer's trajectory was consistent across different parameterizations; moreover, when the evader's coordinates remain the same, the pursuer's coordinates also remain consistent. This result indicated that changing the evader's parameters is equivalent to changing the pursuer's parameters, leaving the shape of the pursuer's trajectory unchanged. We mathematically proved this proposition in \cite{CCP2023pb}. 
  \begin{figure}[h]
  \centering
  \includegraphics[width=0.49\columnwidth]{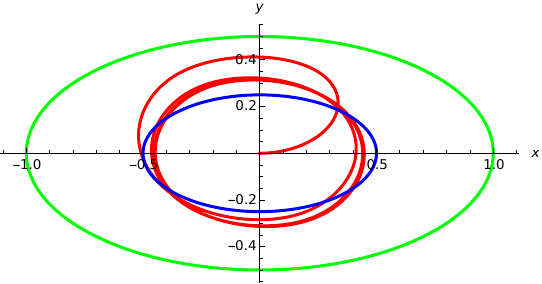}
 \caption{Elliptical one-on-one pursuit and evasion problem when $n=0.5$. The evader orbits an ellipse $X^2+Y^2/(0.5)^2=1$ from $(1,0)$. The meaning of the red and green lines and other initial conditions are the same as in Fig. \ref{Fig-circular}. The pursuer's trajectory does not converge to the blue line, i.e., the ellipse orbited by the evader scaled by a factor of $n=0.5$ \cite[Figure 1(b)]{CCP2023pb} 
 }
 \label{Fig-Elliptical}
\end{figure}

This proposition allows us to assume that the evader's speed remains constant when we are only concerned with the shape of the pursuer's trajectory. As shown in our preprint \cite{CCP2023}, based on this assumption, a dynamical system can be derived from the simultaneous differential equation by Barton and Eliezer. This dynamical system involves two new variables: the angular difference $\zeta$ between the velocity vectors of the two players and the separation distance $\rho$. When the evader orbits a circle, the dynamical system is autonomous and possesses an asymptotically stable equilibrium point. By contrast, if the evader orbits an ellipse, the dynamical system becomes non-autonomous and lacks any equilibrium point. This observation is the starting point of our analysis: the \emph{geometric shape} of the evader's trajectory (rotationally invariant or not) directly determines whether the reduced dynamical system is autonomous or non-autonomous, and hence the qualitative nature of its long-term behavior.

On the other hand, the original formulation in \cite{CCP2023} encounters a mathematical singularity when the pursuer catches the evader, because the separation distance $\rho$ reaches zero and the equations become ill-defined. To overcome this difficulty, we introduce an alternative formulation in which the distance $\rho=e^{\mu}$ is expressed as the exponential of a new variable $\mu$; catching up then corresponds to $\mu\to-\infty$ rather than $\rho\to 0$, removing the singularity. This substitution further allows the two-dimensional dynamics to be written as a single ordinary differential equation for a complex variable $z=e^{\mu+i\zeta}$, whose modulus and argument simultaneously encode the separation distance and the angular difference between the players' velocities. The resulting complex-valued system admits a clean geometric interpretation and enables rigorous analysis.

Using this reformulation, we establish two main theorems. First, when $n>1$ (pursuer faster than evader), we prove that the separation $\mu$ diverges to $-\infty$ in finite time, i.e.\ the pursuer always captures the evader, and we derive an explicit upper bound for the capture time in terms of the ellipse semi-axes $a$, $b$, the speed ratio $n$, and the initial distance. Second, when $0<n<1$ (pursuer slower than evader), we prove that the dynamical system \eqref{equation-rev} possesses a unique $\pi$-periodic solution and that every solution converges to it globally and asymptotically. The proof uses a Lyapunov-type argument applied to the squared distance $|z_1-z_2|^2$ between two solutions: we show that this quantity decreases strictly over each period, which yields uniqueness and global attractivity via the Poincar\'{e} map \cite{Wiggins}. In the circular case, this periodic solution reduces to the known asymptotically stable equilibrium point identified by Hathaway. In the elliptical case, it corresponds to the closed curve observed numerically in our previous work \cite{CCP2023pb}, thereby providing the first rigorous explanation for that phenomenon.

We also verify these theoretical results against the numerical simulations. For both circular and elliptical cases, the computed blow-up times for $n=1.2$ satisfy the derived upper bounds. The coordinates of the equilibrium point for which $n=0.5$ in circular case coincide with the theoretical value. The limiting closed curves observed for $n=0.5$ in elliptical case are shown to lie within the invariant annulus whose existence the proof guarantees. Also, we simulated elliptical dynamical system from four different initial conditions for $n=0.5$. As a result, all solutions of $z$ converged to the same $\pi$-periodic solution.

The novelty of this paper lies in three aspects. First, while previous pursuit-evasion research focuses either on optimal strategies \cite{Isaacs,Weintraub,Sergey} or on the feasibility of capture when the evader follows a given curve \cite{Azamov,Kuchkarov}, we investigate the long-term geometric shape of the pursuer's trajectory. Second, we identify a clear qualitative dichotomy --- autonomous versus non-autonomous dynamical systems --- determined by the rotational symmetry of the evader's trajectory. Third, we provide the first rigorous proof of existence, uniqueness, and global asymptotic stability of a $\pi$-periodic solution in the elliptical case, explaining the closed curves observed numerically in \cite{CCP2023}. While Rozas’s work \cite{Rozas} focuses on a numerical procedure for obtaining the pursuit curve, the shape of the closed curve to which the pursuer's trajectory converges remains unexplored. Our findings complement his numerical results by characterizing this limiting curve analytically. 

The rest of this paper is organized as follows: 
Section~\ref{sec:preliminary} explains Barton-Eliezer's equation and shows a theory in \cite{CCP2023pb} that the pursuer's trajectory is not affected by the evader's parametrization; Section~\ref{sec:derive} derives the original dynamical system in \cite{CCP2023}; Sections~\ref{sec:circular} and~\ref{sec:elliptical} discuss the dynamical system when the evader orbits a circle and an ellipse, respectively; Section~\ref{sec:alternative} introduces an alternative formulation; Section~\ref{sec:analysis} proves two main theorems about blow-up time when $n>1$ and stability when $n<1$; Section~\ref{sec:verification} verifies these theoretical results in Section~\ref{sec:analysis} against the numerical simulations of Sections~\ref{sec:circular} and~\ref{sec:elliptical}.

\section{Preliminary}\label{sec:preliminary}
We formulate the problem of pursuit and evasion based on \cite{Barton} as a differential equation problem. Let ${\mathbf{E}}(t)=(X(t),Y(t))$ and ${\mathbf{P}}(t)=(x(t),y(t))$ denote the evader and the pursuer position at time $t$, respectively. According to the first condition in Section 1, $\mathbf{E}$ is given and $\dot{\mathbf{E}}\ne \mathbf{0}$. The second condition in Section 1 can be expressed as follows:
\begin{align}
\lambda \dot{\mathbf{P}}&={\mathbf{E}}-{\mathbf{P}}  \qquad \lambda(t)\ge 0,\label{eq-muki}
\end{align}
 where $\lambda$ represents the ratio of the distance between the pursuer and the evader to the pursuer's speed. The third condition in Section 1 can be formulated as follows:
\begin{align}
|\dot{\mathbf{P}}|&=n|\dot{\mathbf{E}}|, \label{eq-speed}
\end{align}
where $n>0$ denotes the ratio of the pursuer to the evader speed. Eqs. \eqref{eq-muki} and \eqref{eq-speed} written in components are as follows:
\begin{align}
&X=x+\lambda\dot{x}, \label{barton-tsuiseki1}\\
&Y=y+\lambda\dot{y}, \label{barton-tsuiseki2}\\
&n^2\left(\dot{X}^2+\dot{Y}^2\right)=\dot{x}^2+\dot{y}^2. \label{barton-tsuiseki3}
\end{align}
The pursuit and evasion problem is formulated as solving for $x(t), y(t)$ and $\lambda (t)$ that satisfy Eqs. \eqref{barton-tsuiseki1} to \eqref{barton-tsuiseki3} for given $X(t), Y(t)$ and $n$. 

These equations are challenging to solve numerically but can be transformed into a more tractable form. From Eqs. \eqref{eq-muki} and \eqref{eq-speed}, 
\begin{align}
    \lambda=\frac{|\mathbf{E}-\mathbf{P}|}{n|\dot{\mathbf{E}}|}=\frac{\sqrt{(X(t)-x(t))^2+(Y(t)-y(t))^2}}{n\sqrt{\dot{X}(t)^2+\dot{Y}(t)^2}}. 
\end{align}
Substituting this into the component equations for $X$ and $Y$ (Eqs. \eqref{barton-tsuiseki1} and \eqref{barton-tsuiseki2}, respectively), we obtain the following two equations:
\begin{align}
\dot{x}&=\frac{n(X(t)-x(t))\sqrt{\dot{X}(t)^2+\dot{Y}(t)^2}}{\sqrt{(X(t)-x(t))^2+(Y(t)-y(t))^2}},\label{eq-x}\\
\dot{y}&=\frac{n(Y(t)-y(t))\sqrt{\dot{X}(t)^2+\dot{Y}(t)^2}}{\sqrt{(X(t)-x(t))^2+(Y(t)-y(t))^2}}.\label{eq-y}
\end{align}
Thus, the pursuit and evasion problem can be stated as solving for $x(t)$ and $y(t)$ that satisfy Eqs. \eqref{eq-x} and \eqref{eq-y} for given $X(t)$, $Y(t)$, and $n$. This method can calculate the solution until just before the pursuer catches the evader because the denominators in these two equations approach zero.

We show that transforming the evader's parameters does not change the shape of the pursuer's trajectory. In other words, there is only one trajectory of the pursuer, and if the parameters of the evader are replaced, the parameters of that trajectory are only replaced.

\begin{theorem}\cite[Theorem 1]{CCP2023pb}
 Assume that $x_s(t), y_s(t), \lambda_s(t)$ satisfy Eqs. \eqref{barton-tsuiseki1} to \eqref{barton-tsuiseki3} for a given $X(t), Y(t)$, $n$. In this case, when the evader's parameter is converted from $t$ to $u$ with $du/dt>0$ (i.e.\ preserving the orientation), $x_s(u), y_s(u), \lambda_s(u)$ satisfy the following simultaneous differential equations in which $t$ in Eqs. \eqref{barton-tsuiseki1} to \eqref{barton-tsuiseki3} is replaced by $u$.\label{thm:parameter}
\begin{align}
&X(u)=x(u)+\lambda(u)\frac{dx(u)}{du}, \label{barton-tsuiseki1-u}\\
&Y(u)=y(u)+\lambda(u)\frac{dy(u)}{du}, \label{barton-tsuiseki2-u}\\
&n^2\left(\left(\frac{dX(u)}{du}\right)^2+\left(\frac{dY(u)}{du}\right)^2\right)=\left(\frac{dx(u)}{du}\right)^2+\left(\frac{dy(u)}{du}\right)^2.\label{barton-tsuiseki3-u}
\end{align}\label{theorem1}
\end{theorem}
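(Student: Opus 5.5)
The plan is a direct chain-rule computation. I read the statement as follows: the evader's path is reparametrized by an orientation-preserving change of parameter $t=t(u)$ with $dt/du>0$, and $x_s(u)$, $y_s(u)$ are shorthand for $x_s(t(u))$, $y_s(t(u))$, so that the pursuer's trajectory is traced by the same points. Likewise $X(u)$, $Y(u)$ denote $X(t(u))$, $Y(t(u))$. For the multiplier, the notation $\lambda_s(u)$ has to be read as the rescaled quantity
\begin{align*}
\tilde\lambda(u)=\lambda_s(t(u))\,\frac{du}{dt}.
\end{align*}
This is forced: $\lambda$ is the distance divided by the pursuer's speed, and the speed changes under reparametrization. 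I will state this interpretation explicitly at the start of the proof.

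First, I differentiate along the new parameter by the chain rule:
\begin{align*}
\frac{dx_s}{du}=\dot x_s\,\frac{dt}{du},\qquad \frac{dy_s}{du}=\dot y_s\,\frac{dt}{du},\qquad \frac{dX}{du}=\dot X\,\frac{dt}{du},\qquad \frac{dY}{du}=\dot Y\,\frac{dt}{du}.
\end{align*}
Then I substitute into Eqs. \eqref{barton-tsuiseki1} and \eqref{barton-tsuiseki2}, evaluated at $t=t(u)$. Since
\begin{align*}
\lambda_s\,\dot x_s=\lambda_s\,\frac{du}{dt}\,\frac{dx_s}{du}=\tilde\lambda\,\frac{dx_s}{du},
\end{align*}
Eq. \eqref{barton-tsuiseki1-u} follows immediately, and Eq. \eqref{barton-tsuiseki2-u} follows in the same way for the $y$-component. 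Because $du/dt>0$ and $\lambda_s\ge0$, we get $\tilde\lambda\ge0$, so the sign condition in Eq. \eqref{eq-muki} is preserved. This is exactly where orientation preservation is used: it keeps the pursuer's velocity pointing toward the evader rather than away from it.

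Next, for Eq. \eqref{barton-tsuiseki3-u}, I multiply Eq. \eqref{barton-tsuiseki3} by $(dt/du)^2$. Both sides scale by the same factor, which gives the $u$-version directly. The condition $\dot{\mathbf E}\neq\mathbf 0$ also carries over, because $dt/du\neq0$.

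No analytic obstacle arises; the only delicate point is the bookkeeping of $\lambda$. If one insisted on keeping literally $\lambda_s(t(u))$, Eq. \eqref{barton-tsuiseki1-u} would fail unless $du/dt\equiv1$. The proof should therefore make clear that the pursuer's position, and hence the shape of its trajectory, is unchanged, while the multiplier is rescaled by $du/dt$. Finally, I note that the argument is reversible, since $u\mapsto t$ is also orientation preserving. This gives a one-to-one correspondence between solutions under the two parametrizations, which is the claimed invariance of the pursuer's trajectory.
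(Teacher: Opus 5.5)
Your proposal is correct and matches the paper's proof. The paper likewise obtains $\lambda_s(u)=\lambda_s(t)\,\frac{du}{dt}$, by writing $\lambda$ as distance over pursuer speed and pulling $du/dt>0$ out of the square root; it then checks the two position equations by the chain rule and the speed equation by multiplying through by $(dt/du)^2$. Your explicit remarks that $\lambda\ge 0$ is preserved and that the argument is reversible are harmless additions the paper leaves implicit.
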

\begin{proof}
We derive a formula for the relationship between $\lambda_s(t)$ and $\lambda_s(u)$. From the definition of $\lambda_s(t)$ the following equation holds. 
\begin{align}
\lambda_s(t)&=\frac{\sqrt{(X(t)-x_s(t))^2+(Y(t)-y_s(t))^2}}{\sqrt{\left(\frac{dx_s(t)}{dt}\right)^2+\left(\frac{dy_s(t)}{dt}\right)^2}}.
\end{align}
Since $u$ is a function of $t$, the following two equations hold. 
\begin{align}
\frac{dx_s(t)}{dt}=\frac{dx_s(u(t))}{dt}&=\frac{dx_s(u)}{du}\frac{du}{dt}, \\
\frac{dy_s(t)}{dt}=\frac{dy_s(u(t))}{dt}&=\frac{dy_s(u)}{du}\frac{du}{dt}.
\end{align}
The parameter $u$ is converted from $t$ which is the parameter of the evader's trajectory, thus $X(u)=X(t)$, $Y(u)=Y(t)$, $x_s(u)=x_s(t)$, $y_s(u)=y_s(t)$. Then the function $\lambda_s(t)$ is transformed as follows: 
\begin{align}
\lambda_s(t)&=\frac{\sqrt{(X(u)-x_s(u))^2+(Y(u)-y_s(u))^2}}{\frac{du}{dt}\sqrt{\left(\frac{dx_s(u)}{du}\right)^2+\left(\frac{dy_s(u)}{du}\right)^2}}=\frac{\lambda_s(u)}{\frac{du}{dt}}.
\end{align}
Therefore, $\lambda_s(u)=\lambda_s(t)\frac{du}{dt}$. By using chain rule, the right-hand side of Eq. \eqref{barton-tsuiseki1-u} is calculated as follows:
\begin{align}
x_s(u)+\lambda_s(u)\frac{dx_s(u)}{du}&=x_s(t)+\lambda_s(t)\frac{du}{dt}\frac{dx_s(t(u))}{dt}\frac{dt}{du}\\
&=x_s(t)+\lambda_s(t)\frac{dx_s(t)}{dt}
\intertext{Since $x_s(t)$, $\lambda_s(t)$ are solutions of Eq. \eqref{barton-tsuiseki1}, }
&=X(t)\notag \\
&=X(u).
\end{align}
Therefore, $x_s(u)+\lambda_s(u)\dfrac{dx_s(u)}{du}=X(u)$. Equation $y_s(u)+\lambda_s(u)\dfrac{dy_s(u)}{du}=Y(u)$ can be proven in the same way. Moreover, $x_s(u)$ and $y_s(u)$ satisfy Eq. \eqref{barton-tsuiseki3-u} as follows:
\begin{align}
&\left(\frac{dx_s(u)}{du}\right)^2+\left(\frac{dy_s(u)}{du}\right)^2\\
&=\left(\left(\frac{dx_s(t)}{dt}\right)^2+\left(\frac{dy_s(t)}{dt}\right)^2\right)\left(\frac{dt}{du}\right)^2\\
&=n^2\left(\left(\frac{dX(t)}{dt}\right)^2+\left(\frac{dY(t)}{dt}\right)^2\right)\left(\frac{dt}{du}\right)^2\\
&=n^2\left(\left(\frac{dX(u)}{du}\right)^2+\left(\frac{dY(u)}{du}\right)^2\right).
\end{align}
\end{proof}

\begin{remark}
The same argument holds for the simultaneous differential Eqs. \eqref{eq-x} and \eqref{eq-y}.
\end{remark}

\section{Derive a Dynamical System}\label{sec:derive}
Theorem \ref{thm:parameter} allows us to assume the evader's speed $|\dot{\mathbf{E}}|$ is permanently one when we are only concerned with the shape of the pursuer's trajectory. A dynamical system can be derived from this assumption \cite{CCP2023}. First, since Eq. \eqref{eq-speed} holds, $|\dot{\mathbf{P}}|=n$. Therefore  players' velocity vectors are parameterized by their arguments as follows:
\begin{align}
&\dot{X}=\cos{\varphi(t)},\quad \dot{Y}=\sin{\varphi(t)}.\label{DDS-1}\\
&\dot{x}=n\cos{\theta(t)},\quad \dot{y}=n\sin{\theta(t)}.\label{DDS-2}
\end{align}
Second, denote $\rho(t)$ as both players' distance $|\mathbf{E}-\mathbf{P}|$. Taking absolute value in Eq. \eqref{eq-muki},
\begin{align}
|\lambda| |\dot{\mathbf{P}}|&=|\mathbf{E}-\mathbf{P}|\\
\lambda n&=\rho(t)\\
\lambda(t)&=\frac{\rho(t)}{n}.
\end{align}
Substituting this equation to Eqs. \eqref{barton-tsuiseki1} and \eqref{barton-tsuiseki2},
\begin{equation}
\left\{
\begin{aligned}
X&=x+\frac{\rho}{n}\dot{x},\\
Y&=y+\frac{\rho}{n}\dot{y}.
\end{aligned}\label{DDS-3}
\right.
\end{equation}
Third, we derive a simultaneous differential equation about $\varphi(t)$ and $\theta(t)$. Differentiating Eq. \eqref{DDS-3} with respect to $t$ gives us the following equation:
\begin{equation}
\left\{
\begin{aligned}
\dot{X}&=(1+\dfrac{\dot{\rho}}{n})\dot{x}+\dfrac{\rho}{n}\ddot{x},\\
\dot{Y}&=(1+\dfrac{\dot{\rho}}{n})\dot{y}+\dfrac{\rho}{n}\ddot{y}.  
\end{aligned}\label{DDS-4}
\right.
\end{equation}
Substituting Eqs. \eqref{DDS-1} and \eqref{DDS-2} into Eq. \eqref{DDS-4}, note that $\ddot{x}=-n\dot{\theta}\sin\theta$ and $\ddot{y}=n\dot{\theta}\cos\theta$. Computing $\dot{X}\cos\theta+\dot{Y}\sin\theta$ and $-\dot{X}\sin\theta+\dot{Y}\cos\theta$ and using the addition theorem of trigonometric functions, we derive as follows:
\begin{equation}
\left\{
\begin{aligned}
\dot{\rho}&=\cos(\varphi-\theta)-n, \\
\rho\dot{\theta}&=\sin(\varphi-\theta).
\end{aligned}\label{DDS-5}
\right.
\end{equation}
Finally, we introduce a new variable $\zeta \coloneqq \varphi-\theta$. Eq. \eqref{DDS-5} is then transformed into the following more analyzable simultaneous differential equation:
\begin{equation}
\left\{
\begin{aligned}
\dot{\rho}&=\cos \zeta-n, \\
\rho\dot{\zeta}&=-\sin \zeta+\rho\dot{\varphi}.
\end{aligned}\label{DDS-6}
\right.
\end{equation}
We analyze this dynamical system for the pursuit and evasion problem when the evader runs a circle or an ellipse.

\section{Circular case}\label{sec:circular}
\subsection{Dynamical system in circular pursuit and evasion}
Firstly, we discuss when the evader orbits a circle $X^2+Y^2=a^2$. The pursuer's trajectory in Fig. \ref{Fig-circular} is the numerical solution of Eqs. \eqref{eq-x} and \eqref{eq-y} for $X(t)=a\cos t, Y(t)=a\sin t $, $a=1.0$, and $n=0.5$. We derive the dynamical system \eqref{DDS-6} in the case of circular pursuit and evasion. First, we need to alter $t$ to $t/a$ because this parametrization does not hold $|\dot{\mathbf{E}}|=1$. Thus, the new evader parametrization is $X(t)=a\cos (t/a), Y(t)=a\sin(t/a)$. By differentiation, we derive $\varphi(t)=t/a+\pi/2$ and $\dot{\varphi}=1/a$. Hence, we have the dynamical system as follows:
\begin{equation}
\left\{
\begin{aligned}
\dot{\rho}&=\cos \zeta-n, \\
\rho\dot{\zeta}&=-\sin \zeta+\frac{\rho}{a}.\label{DDS-7}
\end{aligned}
\right.
\end{equation}
If $n \le 1$, dynamical system \eqref{DDS-7} has an equilibrium point $(n, \rho)=(\cos \zeta, a\sin \zeta)$, that is, $(\rho, \zeta)=(a\sqrt{1-n^2}, \cos^{-1}n) $. This indicates that the pursuer's trajectory converges to a reduced circle scaled by a factor of $n$. However, if $n>1$, $\dot{\rho}<0$ for all $t$, so dynamical system \eqref{DDS-7} does not have any equilibrium points. In circular pursuit and evasion  $(X(t)-x(t), Y(t)-y(t))=(\rho (t)\cos (\varphi(t)-\zeta(t)), \rho (t)\sin (\varphi(t)-\zeta(t)))$, $\varphi(t)=t/a+\pi/2$ and $(X(t), Y(t))=(a\cos t, a\sin t)$. Therefore, the coordinates of the pursuer are described as follows:
\begin{equation}
\left\{
\begin{aligned}
x(t)&=-\rho(t)\cos\left(\frac{t}{a}+\frac{\pi}{2}-\zeta(t)\right)+a\cos t, \\
y(t)&=-\rho(t)\sin\left(\frac{t}{a}+\frac{\pi}{2}-\zeta(t)\right)+a\sin t.
\end{aligned}\label{coordinate-t-circle}
\right.
\end{equation}

We check dynamical system \eqref{DDS-7} and both player's position \eqref{coordinate-t-circle} changes depending on $n$ by drawing three $\rho-\zeta$ phase portraits in Figs.~\ref{Fig-Dynamics-circle-0.5}--\ref{Fig-Dynamics-circle-1.0}. We set $a=1.0$. The initial conditions are $\rho(0)=1$ and $\zeta(0)=\pi/2$, which correspond to the evader starting from $(1, 0)$ and the pursuer starting from the origin in Fig. \ref{Fig-circular}. Fig. \ref{Fig-Dynamics-circle-0.5} is the numerical solution of dynamical system \eqref{DDS-7} and both player's position \eqref{coordinate-t-circle} from $t=0$ to $t=10\pi$ with $n=0.5$ by using Mathematica \cite{Mathematica}. In this figure, the solution trajectory terminates at an equilibrium point. Coordinates of the point are $(\cos^{-1}0.5,\sqrt{1-0.5^2})$. 
\clearpage
\begin{figure}[h]
  \centering
  \includegraphics{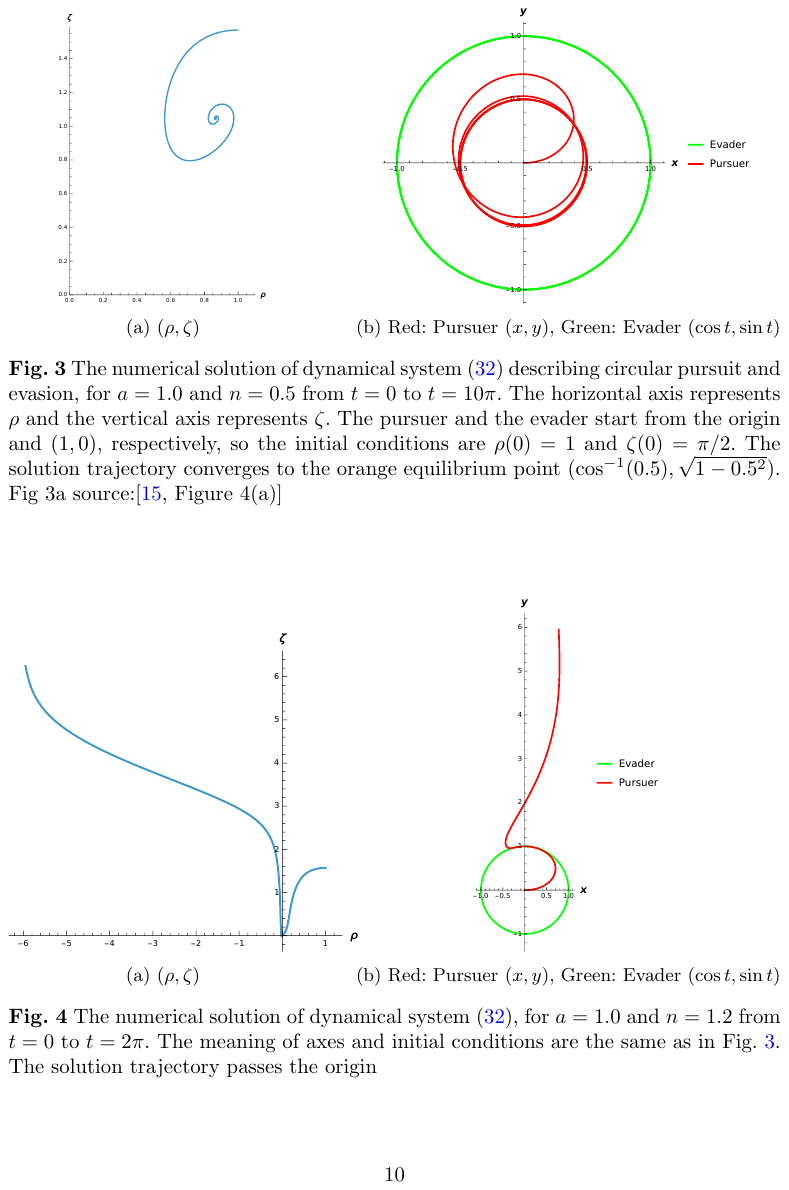}
  \caption{
  The numerical solution of dynamical system \eqref{DDS-7} describing circular pursuit and evasion, for $a=1.0$ and $n=0.5$ from $t=0$ to $t=10\pi$. The horizontal axis represents $\rho$ and the vertical axis represents $\zeta$. The pursuer and the evader start from the origin and $(1, 0)$, respectively, so the initial conditions are $\rho(0)=1$ and $\zeta(0)=\pi/2$. The solution trajectory converges to the orange equilibrium point $(\cos^{-1}(0.5), \sqrt{1-0.5^2})$. Subfigure~(a) is reproduced from \cite[Figure 4(a)]{CCP2023}}
  \label{Fig-Dynamics-circle-0.5}
 \end{figure}

Fig. \ref{Fig-Dynamics-circle-1.2} is the numerical solutions of dynamical system \eqref{DDS-7} and both player's position \eqref{coordinate-t-circle} from $t=0$ to $t=2\pi$ with $n=1.2$. The pursuer catches the evader at $t=1.676$. At this moment, $\rho=\zeta=0$. It means that the pursuer catches up with the evader from behind. After this, for $t<1.676<2\pi$, $\rho<0$. Fig. \ref{Fig-Dynamics-circle-1.2}b shows the results of finding $x$ and $y$ from the numerical solution of $\rho$ and $\zeta$ in Fig. \ref{Fig-Dynamics-circle-1.2}a. After capturing the evader, the pursuer escapes in the opposite direction. This phenomenon is thought to be due to the lack of a non-negative constraint on $\rho$, which is the assumed distance.

\begin{figure}[h]
    \centering
  \includegraphics{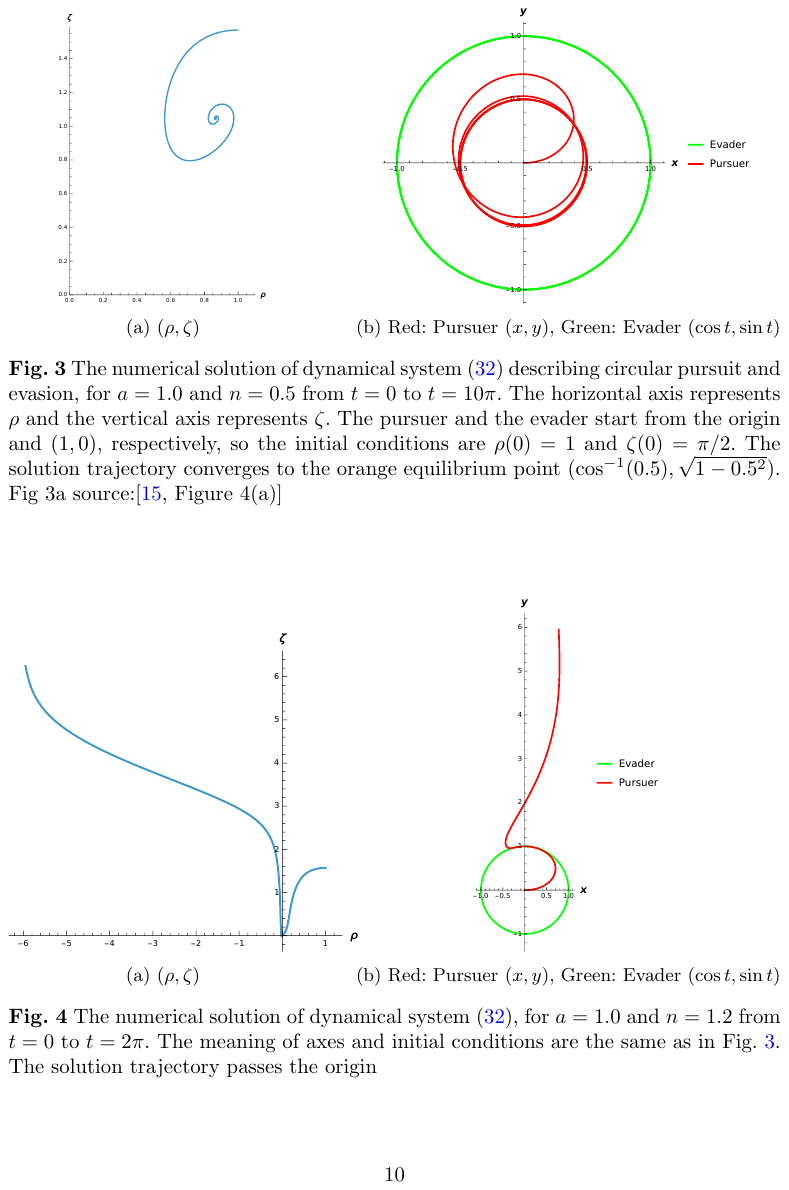}
  \caption{The numerical solution of dynamical system \eqref{DDS-7}, for $a=1.0$ and $n=1.2$ from $t=0$ to $t=2\pi$. The meaning of axes and initial conditions are the same as in Fig. \ref{Fig-Dynamics-circle-0.5}. The solution trajectory passes the origin}\label{Fig-Dynamics-circle-1.2}
\end{figure}

Fig.~\ref{Fig-Dynamics-circle-1.0} is the numerical solution of dynamical system \eqref{DDS-7} and both player's position \eqref{coordinate-t-circle} from $t=0$ to $t=2\pi$ with $n=1.0$. In this boundary case, the equilibrium point degenerates to $(\rho^*, \zeta^*)=(0, \pi/2)$, which lies on the $\zeta$-axis. As shown in Fig.~\ref{Fig-Dynamics-circle-1.0}b, the pursuer asymptotically approaches the same circle as the evader, consistent with Hathaway's second property. The phase portrait in Fig.~\ref{Fig-Dynamics-circle-1.0}a shows that $\rho$ decreases monotonically toward $0$ while $\zeta$ remains bounded. Unlike the $n>1$ case, the pursuer never catches the evader in finite time.
\clearpage
\begin{figure}[h]
  \centering
  \includegraphics{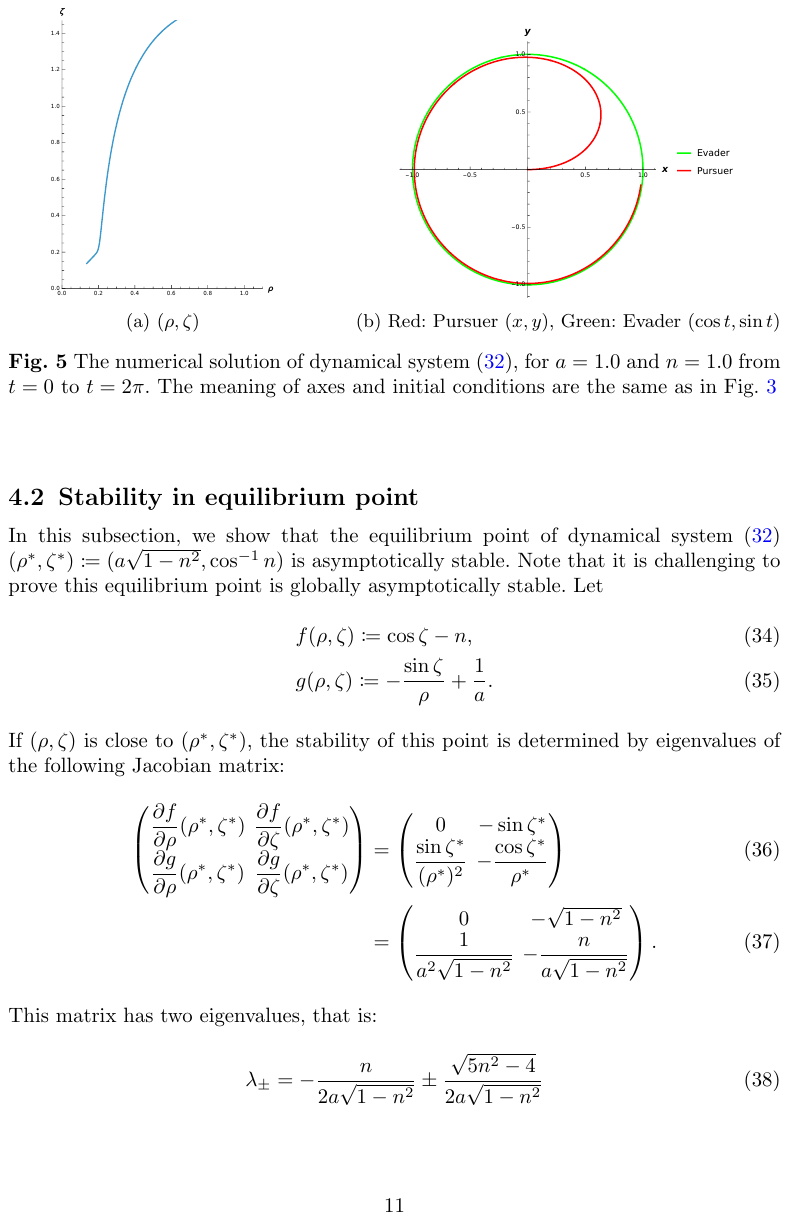}
  \caption{The numerical solution of dynamical system \eqref{DDS-7}, for $a=1.0$ and $n=1.0$ from $t=0$ to $t=2\pi$. The meaning of axes and initial conditions are the same as in Fig. \ref{Fig-Dynamics-circle-0.5}}\label{Fig-Dynamics-circle-1.0}
\end{figure}

\subsection{Stability in equilibrium point}
In this subsection, we show that the equilibrium point of dynamical system \eqref{DDS-7} $(\rho^*, \zeta^*)\coloneqq(a\sqrt{1-n^2}, \cos^{-1}n) $ is asymptotically stable. Note that it is challenging to prove this equilibrium point is globally asymptotically stable. Let
\begin{align}
&f(\rho, \zeta)\coloneqq\cos \zeta-n,\\
&g(\rho, \zeta)\coloneqq-\frac{\sin \zeta}{\rho}+\frac{1}{a}.
\end{align}
If $(\rho, \zeta)$ is close to $(\rho^*, \zeta^*)$, the stability of this point is determined by eigenvalues of the following Jacobian matrix:
\begin{align}
    \begin{pmatrix}
     \dfrac{\partial f}{\partial \rho}(\rho^*, \zeta^*) &  \dfrac{\partial f}{\partial \zeta}(\rho^*, \zeta^*)  \\
     \dfrac{\partial g}{\partial \rho}(\rho^*, \zeta^*) &  \dfrac{\partial g}{\partial \zeta}(\rho^*, \zeta^*)
    \end{pmatrix}
    &=
    \begin{pmatrix}
    0 &  -\sin \zeta^{*}  \\
     \dfrac{\sin \zeta^{*}}{(\rho^*)^2} &  -\dfrac{\cos \zeta^*}{\rho^*}    
    \end{pmatrix}\\
    &=\begin{pmatrix}
    0 &  -\sqrt{1-n^2} \\
     \dfrac{1}{a^2\sqrt{1-n^2}} &  -\dfrac{n}{a\sqrt{1-n^2}}    
    \end{pmatrix}.
\end{align}
This matrix has two eigenvalues, that is:
\begin{align}
    \lambda_{\pm}=-\frac{n}{2a\sqrt{1-n^2}}\pm \frac{\sqrt{5n^2-4}}{2a\sqrt{1-n^2}}
\end{align}
If $0<n<2/\sqrt{5}$, $5n^2-4<0$ then $\lambda_{\pm}$ are complex numbers with negative real parts. If $2/\sqrt{5}\le n<1$, $\sqrt{5n^2-4} < n$, then $\lambda_{\pm}<0$. Therefore, $(\rho^*, \zeta^*)$ is asymptotically stable.

\section{Elliptical case}\label{sec:elliptical}
\subsection{Preparation}
Secondly, we discuss when the evader orbits an ellipse 
\begin{align}
    \frac{X^2}{a^2}+\frac{Y^2}{b^2}=1, a\ne b. \label{EllipseDef}
\end{align}
The pursuer's trajectory in Fig. \ref{Fig-Elliptical} is numerical solutions of Eqs. \eqref{eq-x} and \eqref{eq-y} for $X(t)=a\cos t, Y(t)=b\sin t $, $a=1.0$, $b=0.5$ and $n=0.5$. We derive the dynamical system \eqref{DDS-6} in the case of elliptical pursuit and evasion. Compared to the circular case, it is difficult to find a parametrization in which the evader's speed equals one. Therefore we derive $\dot{\varphi}$ in the following way. Note that we assume that the evader orbits counterclockwise.

Differentiating Eq. \eqref{EllipseDef} by $t$, and dividing by $2$, leads to
\begin{align}
\frac{X}{a}\frac{\dot{X}}{a}+\frac{Y}{b}\frac{\dot{Y}}{b}=0.
\end{align}
Hence, $(X/a,Y/b)$ and $(\dot{X}/a,\dot{Y}/b)$ are orthogonal. Meanwhile, $(X/a,Y/b)$ is length one and parallel to a vector which $(\dot{X}/a,\dot{Y}/b)$ is  rotated by $-\pi/2$. Hence, $(X/a,Y/b)$ is the vector which $(\dot{Y}/b,-\dot{X}/a)=(\sin{\varphi}/b,-\cos{\varphi}/a)$ is normalized. Then we obtain
\begin{align}
&X(\varphi)=\frac{a^2\sin{\varphi}}{\sqrt{a^2\sin^2{\varphi}+b^2\cos^2{\varphi}}},\label{param-3-1}\\
&Y(\varphi)=-\dfrac{b^2\cos{\varphi}}{\sqrt{a^2\sin^2{\varphi}+b^2\cos^2{\varphi}}}.\label{param-3-2}
\end{align}
Dividing Eq. \eqref{param-3-1} by Eq. \eqref{param-3-2} yields
\begin{align}
X(\varphi)=-Y(\varphi)\dfrac{a^2}{b^2}\tan{\varphi}.
\end{align}
Differentiating this equation by $t$ and substituting Eqs. \eqref{DDS-1}, \eqref{param-3-1} and \eqref{param-3-2}, then we have
\begin{align}
\cos{\varphi}=-\sin{\varphi}\frac{a^2}{b^2}\tan{\varphi}+\frac{a^2\dot{\varphi}}{\cos{\varphi}\sqrt{a^2\sin^2{\varphi}+b^2\cos^2{\varphi}}}.
\end{align}
Hence, $\dot{\varphi}$ is described by the following equation of $\varphi$:
\begin{equation}
\dot{\varphi}=\frac{(a^2\sin^2{\varphi}+b^2\cos^2{\varphi})^{3/2}}{a^2b^2}.
\label{phi2t}
\end{equation}

\subsection{Dynamical system in circular pursuit and evasion}
When the evader orbits a circle or ellipse counterclockwise, $\varphi$ is strictly monotonically increasing with respect to $t$, so the parameter $t$ of dynamical system \eqref{DDS-6} can be converted to $\varphi$ as follows:
\begin{equation}
\left\{
\begin{aligned}
\rho^{\prime}(\varphi)f(\varphi)&=\cos \zeta(\varphi)-n, \\
(1-\zeta^{\prime}(\varphi))\rho(\varphi)f(\varphi)&=\sin \zeta(\varphi).
\end{aligned}
\right.
\label{henkan-daen}
\end{equation}
where $\prime\coloneqq\dfrac{d}{d\varphi}$ and $f(\varphi)\coloneqq\dot{\varphi}>0$. Substituting Eq. \eqref{phi2t} for Eq. \eqref{henkan-daen}, we have an equation as follows:
\begin{equation}
\left\{
\begin{aligned}
&\rho^{\prime}(\varphi)\frac{(a^2\sin^2(\varphi)+b^2\cos^2(\varphi))^{\frac{3}{2}}}{a^2b^2}=\cos \zeta(\varphi)-n, \\
&(1-\zeta^{\prime}(\varphi))\rho(\varphi)\frac{(a^2\sin^2(\varphi)+b^2\cos^2(\varphi))^{\frac{3}{2}}}{a^2b^2}=\sin \zeta(\varphi).
\end{aligned}\label{DDS-8}
\right.
\end{equation}
This is the dynamical system of elliptical pursuit and evasion. If $a=b$, the dynamical system becomes as follows:
\begin{equation}
\left\{
\begin{aligned}
\rho^{\prime}(\varphi)&=a(\cos \zeta(\varphi)-n), \\
\rho(\varphi)\zeta^{\prime}(\varphi)&=-a\sin \zeta(\varphi)+\rho(\varphi).
\end{aligned}\label{rikigakukei-enn}
\right.
\end{equation}
This equation can also be obtained by transforming the variable $t$ in the dynamical system \eqref{DDS-7} into $\varphi$. Therefore, this is also the dynamical system in circular pursuit and evasion.

Note that, in elliptical pursuit and evasion  $(X(\varphi)-x(\varphi), Y(\varphi)-y(\varphi))=(\rho (\varphi)\cos (\varphi-\zeta(\varphi)), \rho (\varphi)\sin (\varphi-\zeta(\varphi)))$. Also, $X(\varphi)$ and $Y(\varphi)$ holds \eqref{param-3-1} and \eqref{param-3-2} respectively. Therefore, the coordinates of the pursuer are described as follows:
\begin{equation}
\left\{
\begin{aligned}
x(\varphi)&=-\rho (\varphi)\cos (\varphi-\zeta(\varphi))+\frac{a^2\sin{\varphi}}{\sqrt{a^2\sin^2{\varphi}+b^2\cos^2{\varphi}}}, \\
y(\varphi)&=-\rho (\varphi)\sin (\varphi-\zeta(\varphi))-\frac{b^2\cos{\varphi}}{\sqrt{a^2\sin^2{\varphi}+b^2\cos^2{\varphi}}}.
\end{aligned}\label{coordinate-varphi-ellipse}
\right.
\end{equation}

In the case of circular pursuit and evasion, the dynamical system \eqref{DDS-7}\eqref{rikigakukei-enn} is autonomous. On the other hand, in the case of elliptical pursuit and evasion, the dynamical system \eqref{DDS-8} is non-autonomous, thus it lacks equilibrium points. 

We check dynamical system \eqref{DDS-8} and both player's position \eqref{coordinate-varphi-ellipse} changes depending on $n$ by drawing three $\rho-\zeta$ phase portraits in Figs.~\ref{Fig-Dynamics-ellipse-0.5}--\ref{Fig-Dynamics-ellipse-1.0}. We set $a=1.0$ and $b=0.5$. The initial conditions are  $\rho(0)=1$ and $\zeta(0)=\pi/2$, which correspond to the evader starting from $(1, 0)$ and the pursuer starting from the origin in Fig. \ref{Fig-Elliptical}. Fig. \ref{Fig-Dynamics-ellipse-0.5} is the numerical solution of dynamical system \eqref{DDS-8} and both player's position \eqref{coordinate-varphi-ellipse} from $\varphi=\pi/2$ to $\varphi=\pi/2+10\pi$ with $n=0.5$ by using Mathematica \cite{Mathematica}. This corresponds to solving numerical solutions of Eqs. \eqref{eq-x} and \eqref{eq-y} from $t=0$ to $t=10\pi$ for $n=0.5$.  As shown in this figure, the solution trajectory converges to a closed curve, but there is no mathematical proof. Furthermore, the shape of this closed curve has not been clarified. 

\begin{figure}[h]
  \centering
  \includegraphics{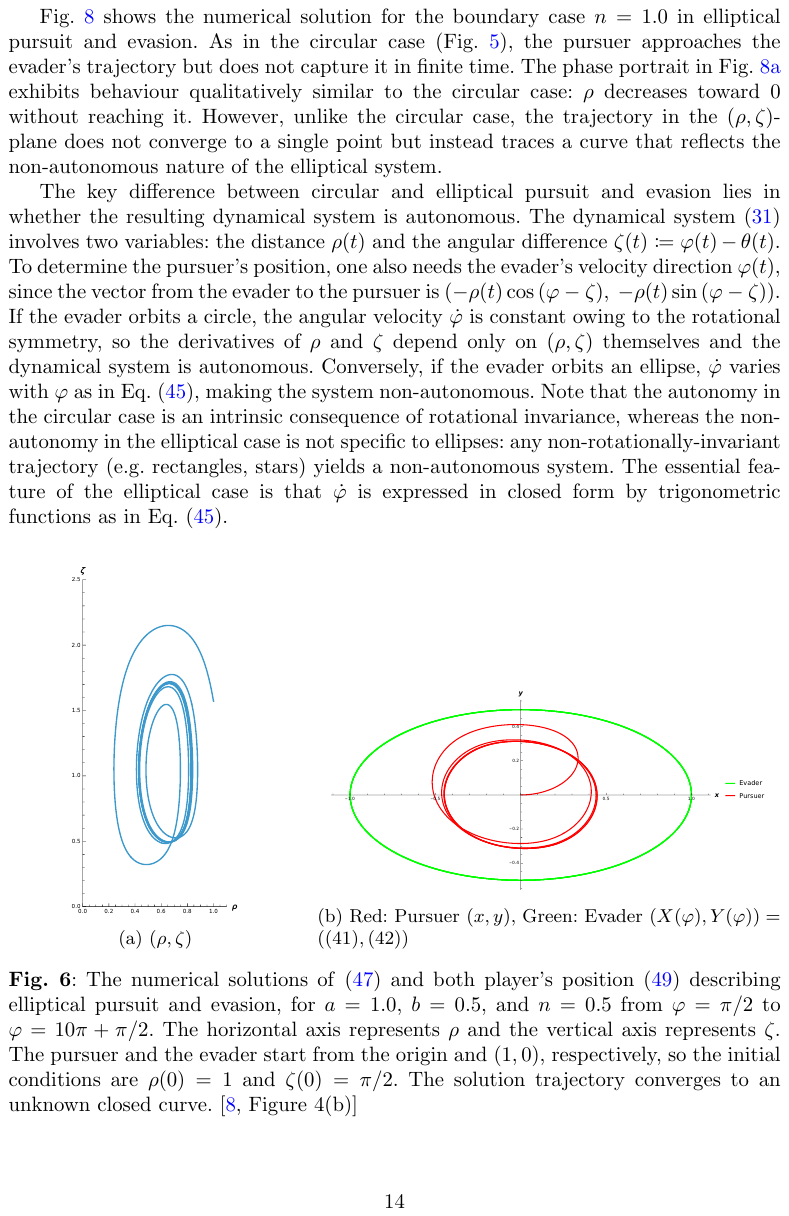}
  \caption{The numerical solutions of \eqref{DDS-8} and both player's position \eqref{coordinate-varphi-ellipse} describing elliptical pursuit and evasion, for $a=1.0$, $b=0.5$, and $n=0.5$ from $\varphi=\pi/2$ to $\varphi=10\pi+\pi/2$. The horizontal axis represents $\rho$ and the vertical axis represents $\zeta$. The pursuer and the evader start from the origin and $(1, 0)$, respectively, so the initial conditions are $\rho(0)=1$ and $\zeta(0)=\pi/2$. The solution trajectory converges to an unknown closed curve. Subfigure~(a) is reproduced from \cite[Figure 4(b)]{CCP2023}}\label{Fig-Dynamics-ellipse-0.5}
\end{figure}

Fig. \ref{Fig-Dynamics-ellipse-1.2} is the numerical solution of dynamical system \eqref{DDS-8} and both player's position \eqref{coordinate-varphi-ellipse} from $\varphi=\pi/2$ to $\varphi=3.151$ with $n=1.2$ by using Mathematica \cite{Mathematica}. This corresponds to solving numerical solutions of Eqs. \eqref{eq-x} and \eqref{eq-y} from $t=0$ to $t=1.229$ for $n=1.2$. The pursuer catches the evader at $\varphi=3.151$. At that time, $\rho=\zeta=0$. It means that the pursuer catches up with the evader from behind same as in the circular case.

 \begin{figure}[h]
    \centering
  \includegraphics{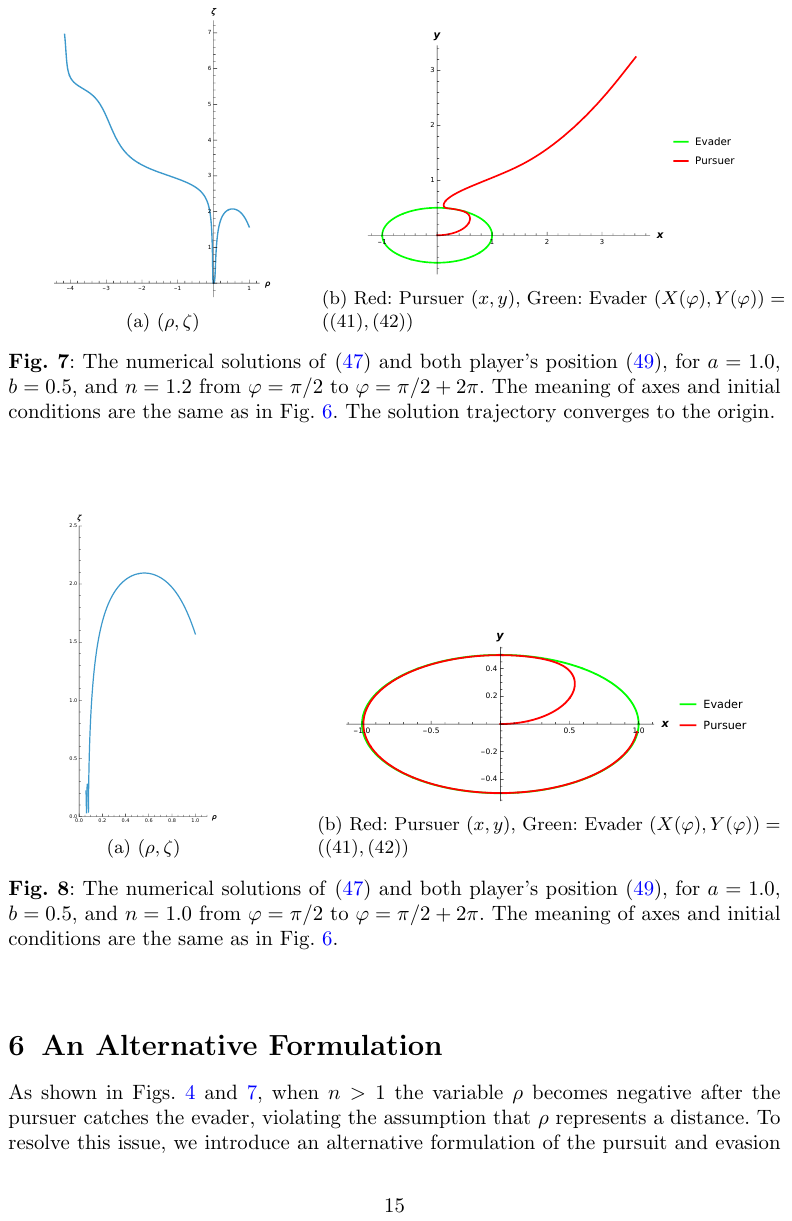}
  \caption{The numerical solutions of \eqref{DDS-8} and both player's position \eqref{coordinate-varphi-ellipse}, for $a=1.0$, $b=0.5$, and $n=1.2$ from $\varphi=\pi/2$ to $\varphi=\pi/2+2\pi$. The meaning of axes and initial conditions are the same as in Fig. \ref{Fig-Dynamics-ellipse-0.5}. The solution trajectory converges to the origin}\label{Fig-Dynamics-ellipse-1.2}
\end{figure}

Fig.~\ref{Fig-Dynamics-ellipse-1.0} shows the numerical solution for the boundary case $n=1.0$ in elliptical pursuit and evasion. As in the circular case (Fig.~\ref{Fig-Dynamics-circle-1.0}), the pursuer approaches the evader's trajectory but does not capture it in finite time. The phase portrait in Fig.~\ref{Fig-Dynamics-ellipse-1.0}a exhibits behaviour qualitatively similar to the circular case: $\rho$ decreases toward $0$ without reaching it. However, unlike the circular case, the trajectory in the $(\rho, \zeta)$-plane does not converge to a single point but instead traces a curve that reflects the non-autonomous nature of the elliptical system.

\begin{figure}[h]
    \centering
  \includegraphics{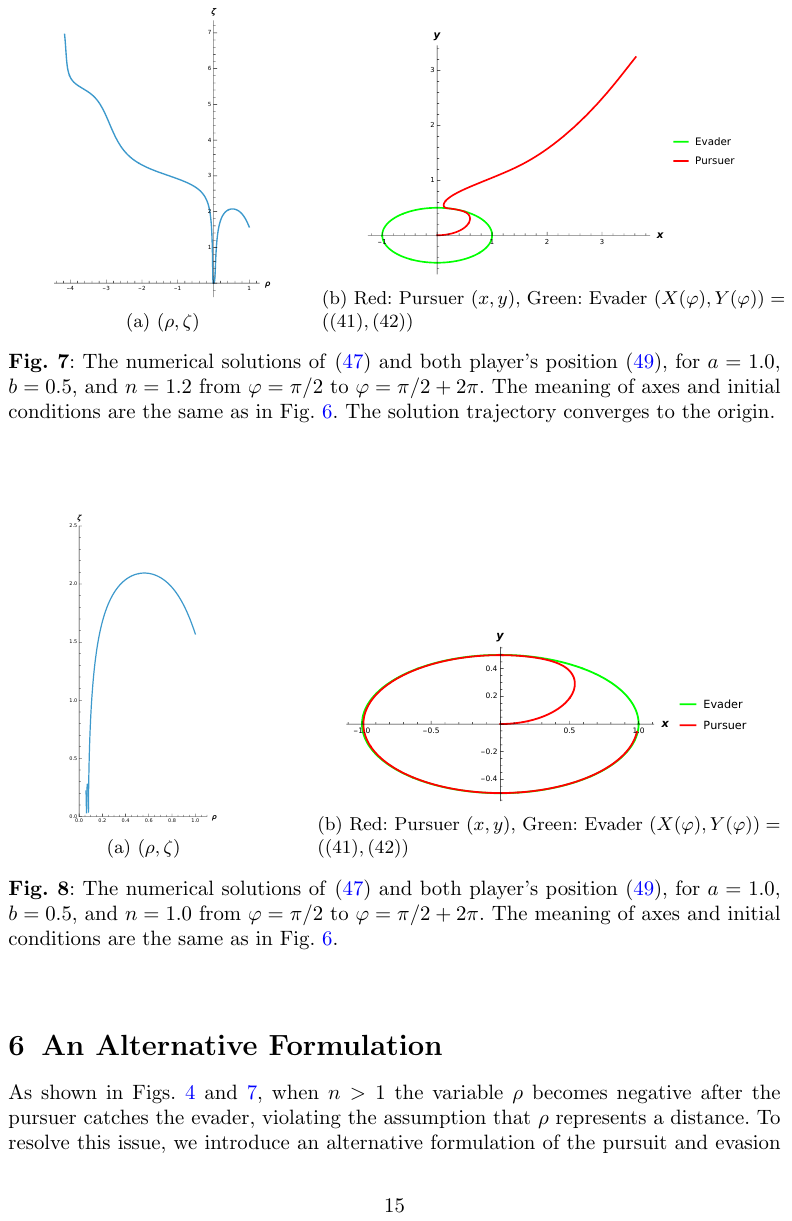}
  \caption{The numerical solutions of \eqref{DDS-8} and both player's position \eqref{coordinate-varphi-ellipse}, for $a=1.0$, $b=0.5$, and $n=1.0$ from $\varphi=\pi/2$ to $\varphi=\pi/2+2\pi$. The meaning of axes and initial conditions are the same as in Fig.~\ref{Fig-Dynamics-ellipse-0.5}.}\label{Fig-Dynamics-ellipse-1.0}
\end{figure}

\subsection{Circular v.s. Elliptical}
The key difference between circular and elliptical pursuit and evasion lies in whether the resulting dynamical system is autonomous. The dynamical system \eqref{DDS-6} involves two variables: the distance $\rho(t)$ and the angular difference $\zeta(t)\coloneqq\varphi(t)-\theta(t)$. To determine the pursuer's position, one also needs the evader's velocity direction $\varphi(t)$, since the vector from the evader to the pursuer is $(-\rho(t)\cos{(\varphi-\zeta)},\ -\rho(t)\sin{(\varphi-\zeta)})$. If the evader orbits a circle, the angular velocity $\dot{\varphi}$ is constant owing to the rotational symmetry, so the derivatives of $\rho$ and $\zeta$ depend only on $(\rho, \zeta)$ themselves and the dynamical system is autonomous. Conversely, if the evader orbits an ellipse, $\dot{\varphi}$ varies with $\varphi$ as in Eq.~\eqref{phi2t}, making the system non-autonomous. Note that the autonomy in the circular case is an intrinsic consequence of rotational invariance, whereas the non-autonomy in the elliptical case is not specific to ellipses: any non-rotationally-invariant trajectory (e.g.\ rectangles, stars) yields a non-autonomous system. The essential feature of the elliptical case is that $\dot{\varphi}$ is expressed in closed form by trigonometric functions as in Eq.~\eqref{phi2t}.

\section{An Alternative Formulation}\label{sec:alternative}
As shown in Figs.~\ref{Fig-Dynamics-circle-1.2} and~\ref{Fig-Dynamics-ellipse-1.2}, when $n>1$ the variable $\rho$ becomes negative after the pursuer catches the evader, violating the assumption that $\rho$ represents a distance. To resolve this issue, we introduce an alternative formulation of the pursuit and evasion problem using complex variables. By expressing the distance between both players via the exponential function and applying Euler's formula, the equations are simplified and the non-negativity of the distance is automatically guaranteed.
Suppose the speed of the evader is always $1$ and the speed of the pursuer is always $n$. Denote their coordinates as $(X(t), Y (t))$ and $(x(t), y(t))$, respectively. We also assume that the pursuer's velocity vector $(\dot{x}, \dot{y})$ is always directed toward the evader.  By using the exponential function, the distance from the evader to the pursuer is defined as follows:
\begin{align}
    \sqrt{(X-x)^2+(Y-y)^2}=e^{\mu(t)}
\end{align}
This emphasizes non-negativity. Therefore,
\begin{align}
   \frac{\sqrt{(X-x)^2+(Y-y)^2}}{\sqrt{\dot{x}^2+\dot{y}^2}}=\frac{e^{\mu}}{n}.
\end{align}
Hence, the equations of pursuit and evasion become as follows:
\begin{equation}
\left\{
\begin{aligned}
    & X = x + \frac{e^{\mu}}{n} \dot{x}\\
    & Y = y + \frac{e^{\mu}}{n} \dot{y}.
\end{aligned}
\right.
\end{equation}
Differentiating with respect to $t$,
\begin{equation}
\left\{
\begin{aligned}
    & \dot{X} = \dot{x}\left(1+\dot{\mu}\frac{e^{\mu}}{n}\right)+ \Ddot{x}\frac{e^{\mu}}{n}\\
    & \dot{Y} = \dot{y}\left(1+\dot{\mu}\frac{e^{\mu}}{n}\right)+ \Ddot{y}\frac{e^{\mu}}{n}.
\end{aligned}
\right.
\label{memo-1}
\end{equation}
Using polar coordinates, the velocity vectors of the evader and pursuer are $(\dot{X}, \dot{Y})=(\cos \varphi(t), \sin \varphi(t))$, $(\dot{x}, \dot{y})=(n\cos \theta(t), n\sin \theta(t))$, respectively. Substituting them for \eqref{memo-1}:
\begin{equation}
\left\{
\begin{aligned}
    & \cos \varphi = \cos \theta(n+\dot{\mu}e^{\mu})- \dot{\theta}e^{\mu}\sin{\theta}\\
    & \sin \varphi = \sin \theta(n+\dot{\mu}e^{\mu})+ \dot{\theta}e^{\mu}\cos{\theta}.
\end{aligned}
\right.
\end{equation}
Rearrange with respect to $\dot{\mu}$ and $\dot{\theta}$:
\begin{equation}
\left\{
\begin{aligned}
    & \dot{\mu} = -ne^{-\mu}+e^{-\mu}\cos(\varphi-\theta)\\
    & \dot{\theta} = e^{-\mu}\sin(\varphi-\theta).
\end{aligned}
\right.
\end{equation}
In addition, by setting $\zeta(t)\coloneqq \varphi(t)-\theta(t)$ as in Section~3, the following equation holds:
\begin{align}
    \dot{\mu}=e^{-\mu}(\cos\zeta-n),\quad \dot{\zeta}=-e^{-\mu}\sin\zeta+\dot{\varphi}.\label{eq:JSIAM2025-5}
\end{align}
Converting parameter $t$ to $\varphi$ and substituting \eqref{phi2t}, we obtain a new elliptical pursuit-evasion dynamical system as follows:
\begin{simultempheq}{align}{equation-rev}
&\frac{d\mu}{d\varphi}=\frac{e^{-\mu(\varphi)}(\cos \zeta(\varphi)-n)}{f(\varphi)}, \label{rev-mu}\\
&\frac{d\zeta}{d\varphi}=-\frac{e^{-\mu(\varphi)}\sin \zeta(\varphi)}{f(\varphi)}+1,\label{rev-zeta}\\
&f(\varphi)=\frac{(a^2\sin^2(\varphi)+b^2\cos^2(\varphi))^{\frac{3}{2}}}{a^2b^2}. 
\end{simultempheq}

Let the initial value of $\varphi$ be $\varphi_0$. For instance, if the evader starts at $(a, 0)$ and revolves counterclockwise around an ellipse, $\varphi_0 = \pi /2$.

\section{Analysis of dynamical system}\label{sec:analysis}

\subsection{Blow-up time}
As mentioned above, if $n>1$ the pursuer catches the evader in finite time. At the moment distance between both players $\rho$ converges to $+0$ and $\mu = \log \rho$ diverges to $-\infty$. In other words, when $n>1$, solution of $\mu$ in \eqref{equation-rev} blows up in finite time. In this section we derive the upper bounds for the blow-up time.

\begin{definition}
 The blow-up time $\varphi_B$ is the value at which $\mu(\varphi) \to -\infty, \varphi \to \varphi_B - 0$
\end{definition}

\begin{theorem}[Finite-time Blow-up]
Assume $a \ge b > 0$ and $n > 1$. An upper bound for the blow-up time in simultaneous equation \eqref{equation-rev} is given as follows:
\begin{align}
    \varphi_B  \le \frac{b e^{\mu(\varphi_0)}}{a^2 (n - 1)}+ \varphi_0.
\end{align}
\label{thm:Finite-time}
\end{theorem}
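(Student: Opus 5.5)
The plan is to abandon the logarithmic variable and work with the distance $\rho=e^{\mu}$, where \eqref{rev-mu} becomes linear in the right-hand side. Since $\frac{d}{d\varphi}e^{\mu}=e^{\mu}\frac{d\mu}{d\varphi}$, equation \eqref{rev-mu} gives
\begin{align}
\frac{d\rho}{d\varphi}=\frac{\cos\zeta(\varphi)-n}{f(\varphi)}\le -\frac{n-1}{f(\varphi)}<0 ,
\end{align}
because $\cos\zeta\le 1<n$. This holds on the whole maximal interval of existence $[\varphi_0,\varphi_B)$, whatever $\zeta$ does. So $\rho$ is strictly decreasing, and blow-up of $\mu$ to $-\infty$ is the same as $\rho\to+0$.

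Next I would integrate from $\varphi_0$ to any $\varphi<\varphi_B$:
\begin{align}
0<\rho(\varphi)\le e^{\mu(\varphi_0)}-(n-1)\int_{\varphi_0}^{\varphi}\frac{d\psi}{f(\psi)} .
\end{align}
Suppose I have a uniform bound $1/f\ge c>0$. Then positivity of $\rho$ forces $\varphi-\varphi_0<e^{\mu(\varphi_0)}/(c(n-1))$ for every $\varphi<\varphi_B$, which yields $\varphi_B\le\varphi_0+e^{\mu(\varphi_0)}/(c(n-1))$. I also need that the solution cannot cease to exist earlier for another reason. This follows because the right-hand sides of \eqref{equation-rev} are smooth and bounded while $\mu$ stays bounded below, and $\mu$ is decreasing, hence bounded above. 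Therefore the only way the solution can end is $\mu\to-\infty$.

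The whole content of the constant lies in the choice of $c$, and this is the step I expect to be the obstacle. To get exactly $\frac{b}{a^2}$ one needs $c=\frac{a^2}{b}$, that is, $f(\varphi)\le \frac{b}{a^2}$. From the formula for $f$ in \eqref{equation-rev}, with $a\ge b$, the quantity $a^2\sin^2\varphi+b^2\cos^2\varphi$ lies between $b^2$ and $a^2$. Hence $\frac{b}{a^2}\le f\le\frac{a}{b^2}$, and $\frac{b}{a^2}$ is the \emph{minimum} of $f$ (attained at $\varphi\in\pi\mathbb Z$), not its maximum.

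I would first try to exploit that the relevant integral is $\int 1/f$ rather than a pointwise bound. However, the minimum of $1/f$ is $b^2/a$, so any pointwise argument gives only the weaker constant $a/b^2$. The average of $1/f$ over a period lies strictly between $b^2/a$ and $a^2/b$ when $a>b$, so averaging cannot reach $a^2/b$ either. Thus, as worded, this estimate closes only where $f\le b/a^2$. That happens for all $\varphi$ only when $a=b$, in which case $f\equiv 1/a$ and the stated bound becomes $a e^{\mu(\varphi_0)}/(n-1)$, which the argument above gives exactly.

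For $a>b$ I would check whether the intended normalization swaps the roles of $a$ and $b$, for instance via $\varphi_0=\pi/2$ and the orientation conventions in \eqref{param-3-1}--\eqref{param-3-2}. Failing that, I would test the stated inequality numerically near $\zeta\equiv0$. That case is the extremal one for the comparison: there $d\rho/d\varphi=-(n-1)/f$ exactly. Starting at $\varphi_0=\pi/2$, where $f=a/b^2$ is largest, such a test would decide whether $\frac{b}{a^2}$ can be correct or whether the bound must carry the constant $\frac{a}{b^2}$.
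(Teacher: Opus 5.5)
Your comparison argument is correct, and it is the same comparison the paper attempts: the paper works with $M=e^{-\mu}=1/\rho$ and integrates $dM/d\varphi\ge cM^2$, which is equivalent to your integration of $d\rho/d\varphi$. Your diagnosis of the constant is also right, and you should commit to it rather than hedge. The paper reaches $b/a^2$ only through two slips. First, \eqref{rev-mu} gives $dM/d\varphi=M^2(n-\cos\zeta)/f(\varphi)$, but the paper writes $f(\varphi)M^2(n-\cos\zeta)$, multiplying by $f$ instead of dividing. Second, it inserts $f_{\min}=b/a^2$ and then evaluates $1/f_{\min}$ as $b/a^2$ rather than $a^2/b$. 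Done correctly, the estimate uses $f_{\max}=a/b^2$ and yields $\varphi_B\le\varphi_0+a e^{\mu(\varphi_0)}/\bigl(b^2(n-1)\bigr)$, which is exactly what your argument proves. No change of phase or orientation convention can rescue the stated constant, because the range $[b/a^2,\,a/b^2]$ of $f$ does not depend on such conventions.

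The test you propose settles the question: for $a>b$ the stated inequality is false. Take $\varphi_0=\pi/2$, $\zeta(\varphi_0)=0$ and $\rho(\varphi_0)=\varepsilon$ small. We have $\rho\le\varepsilon$, $f\le a/b^2$, and $\sin\zeta\ge 2\zeta/\pi$ on $[0,\pi/2]$. So the $\zeta$-equation gives $d\zeta/d\varphi\le 1-2b^2\zeta/(\pi a\varepsilon)$, while $d\zeta/d\varphi=1$ at $\zeta=0$. Hence $\zeta$ stays in $[0,\pi a\varepsilon/b^2]$ for the whole life of the solution, and $n-\cos\zeta=n-1+O(\varepsilon^2)$. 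By the corrected bound, $\varphi$ also stays within $O(\varepsilon)$ of $\pi/2$. This is a maximum of $f$, where $1/f=b^2/a+O(\varepsilon^2)$. Therefore $\varphi_B-\varphi_0=\frac{a\varepsilon}{b^2(n-1)}\bigl(1+O(\varepsilon^2)\bigr)$, which exceeds the claimed $\frac{b\varepsilon}{a^2(n-1)}$ by a factor of about $(a/b)^3$. For $a=1$, $b=1/2$, $n=1.2$, $\varepsilon=0.01$, the blow-up time is about $0.2$ against a claimed bound of $0.025$. (The paper's numerical check starts from $\rho=1$, $\zeta=\pi/2$, far from this extremal regime, so it could not expose the error.)

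So the right outcome is your proof with constant $a/b^2$, together with this counterexample to the statement as written. One small slip of yours: for $a=b$ the stated bound is $e^{\mu(\varphi_0)}/\bigl(a(n-1)\bigr)$, not $a e^{\mu(\varphi_0)}/(n-1)$. Your argument with $c=a$ gives exactly the former.
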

\begin{proof}
When $n > 1$, the term $(\cos \zeta - n)$ in Eq. \eqref{rev-mu} remains strictly negative for all $\zeta$. This implies that $\mu(\varphi)$ monotonically decreases toward $-\infty$. Let us define the state magnitude as $M(\varphi) = e^{-\mu(\varphi)}$. The evolution of $M$ is derived as follows:
\begin{equation}
    \frac{dM}{d\varphi} = \frac{d}{d\varphi}(e^{-\mu}) = -e^{-\mu} \frac{d\mu}{d\varphi} = f(\varphi) e^{-2\mu} (n - \cos \zeta) = f(\varphi) M^2 (n - \cos \zeta).
\end{equation}
Since $n > 1$ and $\cos \zeta \le 1$, it follows that $n - \cos \zeta \ge n - 1 > 0$. Using the lower bound of the periodic coefficient $f_{\min} = b/a^2$, we obtain the following differential inequality:
\begin{equation}
    \frac{dM}{d\varphi} \ge f_{\min} (n - 1) M^2.
    \label{eq:blowup_ineq}
\end{equation}
By separating variables and integrating from $\varphi = \varphi_0$ to $\varphi$, we have:
\begin{equation}
    \int_{M(\varphi_0)}^{M(\varphi)} \frac{1}{M^2} dM \ge \int_{\varphi_0}^{\varphi} f_{\min} (n - 1) d\varphi \implies \frac{1}{M(\varphi_0)} - \frac{1}{M(\varphi)} \ge f_{\min} (n - 1) (\varphi - \varphi_0).
\end{equation}
Taking the limit as $\varphi \to \varphi_B-0$, $1/M(\varphi) \to 0$. Then we obtain the upper bound of the blow-up time as follows:
\begin{equation}
    \varphi_B - \varphi_0 \le \frac{1}{M(\varphi_0) f_{\min} (n - 1)} = \frac{b e^{\mu(\varphi_0)}}{a^2 (n - 1)}.
\end{equation}
\end{proof}
This result confirms that for $n > 1$, the system is not only unstable but its trajectories escape to infinity within a finite time interval. The duration $\varphi_B - \varphi_0$ decreases as $\mu(\varphi_0)$ decreases, with an exponential sensitivity, highlighting the rapid instability of the system in this regime.

\subsection{Periodic solution}

We will use a classical asymptotic tool from the qualitative theory of
ordinary differential equations.

\begin{theorem}[Uniqueness and Global Stability]
Under the conditions $a \ge b > 0$ and $0 < n < 1$, simultaneous equation \eqref{equation-rev} has exactly one $\pi$-periodic solution, and all solutions in $(\mu, \zeta) \in \mathbb{R}^2$ asymptotically converge to this periodic solution.
\label{thm:stability}
\end{theorem}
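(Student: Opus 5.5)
The plan is to work in the complex variable announced in the introduction, $w(\varphi)=e^{\mu(\varphi)+i\zeta(\varphi)}$, so that the pair $(\mu,\zeta)$ with $\zeta$ taken mod $2\pi$ corresponds to a point of $\mathbb{C}\setminus\{0\}$. From \eqref{rev-mu}--\eqref{rev-zeta} one computes $w'=w(\mu'+i\zeta')$. Using $we^{-\mu}=e^{i\zeta}$, this becomes
\begin{align}
\frac{dw}{d\varphi}=\frac{1-n\,w/|w|}{f(\varphi)}+i\,w .
\end{align}
Here $f$ is $\pi$-periodic and bounded between $f_{\min}=b/a^2$ and $f_{\max}=a/b^2$. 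The whole argument rests on a monotonicity property of this vector field. The rotation term $iw$ preserves distances. The map $w\mapsto w/|w|$ is the gradient of the convex function $|w|$, hence monotone: $\Re\langle w_1-w_2,\, w_1/|w_1|-w_2/|w_2|\rangle\ge 0$. Therefore for two solutions $w_1,w_2$ we get
\begin{align}
\frac{d}{d\varphi}|w_1-w_2|^2=-\frac{2n}{f}\,\Re\Big[\overline{(w_1-w_2)}\Big(\frac{w_1}{|w_1|}-\frac{w_2}{|w_2|}\Big)\Big]\le 0 .
\end{align}
So the period map $P$ over $[\varphi_0,\varphi_0+\pi]$ is nonexpansive.

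\emph{Strictness.} Equality above holds only where $w_1/|w_1|=w_2/|w_2|$, that is, where $\zeta_1=\zeta_2$. If that persisted on an interval, then \eqref{rev-zeta} would force $e^{-\mu_1}\sin\zeta=e^{-\mu_2}\sin\zeta$ there. Either $\mu_1=\mu_2$, so the solutions coincide, or $\sin\zeta\equiv0$ on the interval, which is incompatible with $\zeta'=1$. Hence $|P(w_1)-P(w_2)|<|w_1-w_2|$ for distinct initial data.

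\emph{Existence.} I expect this to be the main obstacle, because $d|w|/d\varphi=(\cos\zeta-n)/f$ has no sign, so no trivially invariant disk exists. For large $|w|$ one has $\zeta'=1+O(1/|w|)$. Over a window of length $2\pi$, $\zeta\approx\zeta(\varphi_0)+(\varphi-\varphi_0)$, so
\begin{align}
\int \frac{\cos\zeta}{f}\,d\varphi\approx 0 .
\end{align}
This holds because $f$ is $\pi$-periodic while $\cos(\,\cdot+\pi)=-\cos$. The change of $|w|$ over $2\pi$ is then $-n\int_0^{2\pi}d\varphi/f+O(1/|w|)<0$, which gives an invariant closed disk for $P^2$. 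I will also need to keep solutions away from $w=0$ (that is, $\mu\to-\infty$). For small $|w|$ the term $-\sin\zeta/(|w|f)$ drives $\zeta$ quickly toward $0$, where $\cos\zeta-n>1-n>0$, so $|w|$ grows. This should give an inner radius and thus an invariant annulus $A$. Brouwer's theorem applied to the disk, combined with the inner repulsion, gives a fixed point $p$ of $P^2$. I will make both radius estimates quantitative with $f_{\min}$ and $f_{\max}$.

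\emph{Uniqueness, $\pi$-periodicity and global attraction.} If $p\ne q$ were two fixed points of $P^2$, then $|P^2p-P^2q|=|p-q|$, contradicting strictness. Since $P(p)$ is also fixed by $P^2$, it follows that $P(p)=p$, so the periodic solution is $\pi$-periodic and unique. For an arbitrary initial point, the sequence $d_k=|P^k w-p|$ is decreasing. The orbit eventually enters the compact annulus $A$, so its $\omega$-limit set is nonempty and lies on a circle $|x-p|=d_\infty$ that is invariant under $P$. Strict contraction with respect to the fixed point $p$ then forces $d_\infty=0$. Continuous dependence on initial data on $[\varphi_0,\varphi_0+\pi]$ transfers this convergence of iterates to convergence of the full solutions $(\mu,\zeta)$.
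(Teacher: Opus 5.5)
Your contraction argument is correct and is essentially the paper's Steps 1--3, 6 and 7. You use the same complex ODE and the same identity $\frac{d}{d\varphi}|w_1-w_2|^2=-\frac{2n}{f}(\rho_1+\rho_2)(1-\cos(\zeta_1-\zeta_2))$, which is your monotonicity of $w\mapsto w/|w|$ made explicit. Your strictness and $\omega$-limit arguments also match the paper's. The remark that $P(p)$ is again fixed by $P^2$ is a neat way to get $\pi$-periodicity.

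\textbf{The existence step fails as written.} Brouwer needs a continuous self-map of a disk, but $P^2$ is undefined at $w=0$. On an annulus, topology alone gives nothing: $se^{i\theta}\mapsto\frac{s+m}{2}e^{i(\theta+\alpha)}$ with $m=(r+R)/2$ and $\alpha\notin2\pi\mathbb{Z}$ maps $\{r\le|w|\le R\}$ into its interior and has no fixed point. So ``outer attraction plus inner repulsion'' does not produce $p$. Existence has to come from your strictness instead. If $K$ is compact, $P^2(K)\subset K$ and $P^2$ is continuous on $K$, then a minimiser of $|P^2w-w|$ on $K$ is a fixed point.

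The paper does exactly this in another form. It follows the non-increasing quantity $D(\varphi)=|z(\varphi+\pi)-z(\varphi)|^2$ along one orbit, takes a Bolzano--Weierstrass limit $z^\star$ of the Poincar\'e iterates, and uses Step~3 to get $P(z^\star)=z^\star$ directly. Its outer bound also differs from yours: by Cauchy--Schwarz, the pursuer stays within $\max\{|\mathbf{P}(\varphi_0)|,a\}$ of the origin. Your $2\pi$-averaging is valid, but it relies on $f(\varphi+\pi)=f(\varphi)$ and only controls $P^2$.

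\textbf{The inner radius is false.} You promise to make it quantitative, but it cannot be done. The paper's Step~4 asserts the same thing without proof (``no trajectory beginning outside $\{\rho<r\}$ ever enters it''), so this gap is shared.
\begin{itemize}
\item At $w=-r$, i.e.\ $\zeta=\pi$, one has $d|w|/d\varphi=-(1+n)/f<0$. Hence no annulus $\{r\le|w|\le R\}$ is forward invariant.
\item Worse, near $\zeta=\pi$ the fast term $-\sin\zeta/(|w|f)$ pushes $\zeta$ \emph{away} from $\pi$ on both sides.
\item Take the box $|\zeta-\pi|\le\eta_0<\pi/2$, $0<|w|\le\rho_0$ with $\rho_0<\sin\eta_0/f_{\max}$. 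Trajectories can leave it only through the two sides $\zeta=\pi\pm\eta_0$, both strict egress. Inside the box, $d|w|/d\varphi\le-(\cos\eta_0+n)/f_{\max}$.
\item A Wa\.{z}ewski (shooting) argument on the segment $|w|=\rho_0$ then gives, for every starting $\varphi$, a solution that stays in the box. It reaches $w=0$ ($\mu\to-\infty$) in finite $\varphi$. This is a head-on collision, and it occurs even for $0<n<1$.
\end{itemize}
For such data the Poincar\'e iterates are undefined and the claimed convergence fails. The statement must therefore exclude an exceptional set. What the contraction argument actually needs is one forward orbit contained in a compact subset of $\mathbb{C}\setminus\{0\}$, and neither your proposal nor the paper validly provides it.
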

\begin{proof}
The proof is based on a Lyapunov-type contraction estimate applied to two natural distance functionals:
\begin{itemize}
\item $L(\varphi)\coloneqq|z_1(\varphi)-z_2(\varphi)|^2$, the squared distance
between any two solutions $z_1$, $z_2$;
\item $D(\varphi)\coloneqq|z(\varphi+\pi)-z(\varphi)|^2$, the squared period-shift of a single solution $z$;
\end{itemize}
where $z\coloneqq e^{\mu+i\zeta}$. Since $f$ is $\pi$-periodic, the shifted curve $\tilde z(\varphi)\coloneqq z(\varphi+\pi)$ is again a solution of the complex ODE \eqref{equation-rev-2}, so $D$ is just $L$ applied to the pair $(z,\tilde z)$.

The proof has the following structure:
Step~1 introduces a new complex variable $z$ and reduces simultaneous differential equation \eqref{equation-rev} to a complex ordinary differential equation;
Step~2 derives a monotonicity inequality for $L$;
Step~3 strengthens it to strict decrease over one period by a proof by contradiction;
Step~4 constructs a forward-invariant compact annulus $\mathcal{A}$;
Step~5 introduces the Poincar\'e map $P\colon\mathcal{A}\to\mathcal{A}$ and uses
$D$ together with a subsequence argument to produce a fixed point of $P$,
which corresponds to a $\pi$-periodic solution; Step~6 establishes uniqueness
in one line; Step~7 uses the same Poincar\'e-map argument applied to $L$ to
obtain global asymptotic convergence.

\textbf{Step 1: Reduction to a complex ODE.}
Define $z(\varphi)\coloneqq e^{\mu(\varphi)+i\zeta(\varphi)}$,
so that $|z|=e^{\mu}=\rho$ and $z/|z|=e^{i\zeta}$.
From \eqref{equation-rev}, $z$ holds the following equation,
\begin{align}
    \frac{dz}{d\varphi}
    &= e^{\mu+i\zeta}\!\left(\frac{d\mu}{d\varphi}+i\frac{d\zeta}{d\varphi}\right)
     = e^{\mu+i\zeta}\!\left(
         \frac{e^{-\mu}(e^{-i\zeta}-n)}{f(\varphi)}+i
       \right)
     = \frac{1}{f(\varphi)}-\frac{n}{f(\varphi)}\frac{z}{|z|}+iz.
    \label{equation-rev-2}
\end{align}

\textbf{Step 2: Monotone decrease of the squared distance.}
Let $z_1,z_2$ be two solutions of \eqref{equation-rev-2}
and set $L(\varphi)\coloneqq|z_1(\varphi)-z_2(\varphi)|^2$. Then
\begin{align}
    \frac{dL}{d\varphi}
    &= \left(
        \frac{d\overline{z_1}}{d\varphi}-\frac{d\overline{z_2}}{d\varphi}
      \right)(z_1-z_2)+\left(
        \frac{dz_1}{d\varphi}-\frac{dz_2}{d\varphi}
      \right)\overline{(z_1-z_2)}\notag\\
    &=-\frac{n}{f(\varphi)}\left\{\left(\frac{\overline{z_1}}{|z_1|}-\frac{\overline{z_2}}{|z_2|}\right)(z_1-z_2)+\left(\frac{z_1}{|z_1|}-\frac{z_2}{|z_2|}\right)\overline{(z_1-z_2)}\right\}\notag\\
    &=-\frac{2n}{f(\varphi)}\left\{(|z_1|+|z_2|)-\Re[z_1\overline{z_2}]\left(\frac{1}{|z_1|}+\frac{1}{|z_2|}\right)\right\}
\end{align}
Substituting $z_j=\rho_j e^{i\zeta_j},\ j=1,2$. $\Re[z_1\overline{z_2}]=\rho_1\rho_2\cos(\zeta_1-\zeta_2)$ so expanding the remaining term yields
\begin{align}
    \frac{dL}{d\varphi}
    = -\frac{2n}{f(\varphi)}(\rho_1+\rho_2)\bigl(1-\cos(\zeta_1-\zeta_2)\bigr)
    \leq 0.
    \label{dLdphi}
\end{align}

\textbf{Step 3: Strict decrease of $L$ over one period.}
We show that for two distinct solutions $z_1\not\equiv z_2$,
\begin{align}
L(\varphi_0+\pi) < L(\varphi_0).
\label{strict}
\end{align}
We argue by contradiction. Suppose, on the contrary, that
$L(\varphi_0+\pi)=L(\varphi_0)$. By \eqref{dLdphi} together with the
positivity of $\rho_j$ and $f$, this would force
$\cos(\zeta_1(\varphi)-\zeta_2(\varphi))=1$ throughout
$[\varphi_0,\varphi_0+\pi]$, i.e.\ $\zeta_2\equiv\zeta_1+2k\pi$ on this
interval for some integer $k$. Subtracting the two $\zeta$-equations in
\eqref{rev-zeta} would then give
\begin{align}
0=\frac{d(\zeta_1-\zeta_2)}{d\varphi}
=-\frac{\sin\zeta_1}{f(\varphi)}\bigl(e^{-\mu_1}-e^{-\mu_2}\bigr).
\end{align}
Since $z_1\not\equiv z_2$ and $e^{i\zeta_1}=e^{i\zeta_2}$, the radii must
differ, so $e^{-\mu_1}\not\equiv e^{-\mu_2}$ on any sub-interval; hence
$\sin\zeta_1\equiv 0$. But the $\zeta$-equation under
$\sin\zeta_1\equiv 0$ reduces to $d\zeta_1/d\varphi\equiv 1$, which
contradicts $\zeta_1$ being constant modulo $2\pi$. This contradiction shows
that equality cannot hold, and by continuity $\cos(\zeta_1-\zeta_2)<1$ on a
sub-interval of positive length. Integrating the strictly negative integrand
of \eqref{dLdphi} over $[\varphi_0,\varphi_0+\pi]$ yields \eqref{strict}.

\textbf{Step 4: Construction of a forward-invariant compact set.}
Observe first that $\rho=e^\mu$ satisfies
\begin{align}
\frac{d\rho}{d\varphi}=e^\mu\frac{d\mu}{d\varphi}=\frac{\cos\zeta-n}{f(\varphi)},
\label{dR}
\end{align}
which is independent of $\rho$ and bounded uniformly:
$|d\rho/d\varphi|\le(1+n)/f_{\min}$.

\textit{(Lower bound: $\rho$ cannot approach $0$).}
If $\rho$ is small, $e^{-\mu}$ is large and the term
$-e^{-\mu}\sin\zeta/f(\varphi)$ in the $\zeta$-equation drives $\zeta$
rapidly toward $0$ (toward $0$ from above when $\sin\zeta>0$, from below when
$\sin\zeta<0$). Once $\zeta$ is near $0$, $\cos\zeta\approx 1$, so by
\eqref{dR},
\begin{align}
\frac{d\rho}{d\varphi}\approx\frac{1-n}{f(\varphi)}>0
\end{align}
since $0<n<1$. Hence trajectories are repelled away from $\rho=0$, and there
exists $r>0$, independent of initial conditions, such that no trajectory
beginning outside $\{\rho<r\}$ ever enters it.

\textit{(Upper bound: $\rho$ is ultimately bounded).}
We show by an elementary geometric argument that the pursuer's position 
$P$ remains in a bounded region, which immediately yields an upper bound 
for $\rho$.

Recall from Section 3 and 6 that the pursuer's velocity vector points toward 
the evader and has speed $n$, so
\begin{equation}
  \dot {\mathbf{P}} = \frac{n(\mathbf{E} - \mathbf{P})}{\rho},
\end{equation}
where $\rho = |\mathbf{E} - \mathbf{P}|$. Differentiating $|\mathbf{P}|^2 = \mathbf{P} \cdot \mathbf{P}$ with respect 
to $t$ gives
\begin{equation}
  \frac{d|\mathbf{P}|^2}{dt} 
  = 2\mathbf{P} \cdot \dot {\mathbf{P}}
  = \frac{2n}{\rho}\bigl[\mathbf{P} \cdot \mathbf{E} - |\mathbf{P}|^2\bigr].
\end{equation}
By the Cauchy--Schwarz inequality and $|\mathbf{E}| \le a$,
\begin{equation}
  \mathbf{P} \cdot \mathbf{E} \le |\mathbf{P}|\,|\mathbf{E}| \le a|\mathbf{P}|,
\end{equation}
hence
\begin{equation}
  \frac{d|\mathbf{P}|^2}{dt} 
  \le \frac{2n|\mathbf{P}|}{\rho}\bigl(a - |\mathbf{P}|\bigr).
\end{equation}
Therefore $d|\mathbf{P}|^2/dt < 0$ whenever $|\mathbf{P}| > a$, which means that the 
pursuer cannot escape from the disk of radius $a$ once it lies inside, 
and any pursuer initially outside this disk monotonically approaches it. 
In particular, converting parameter from $t$ to $\varphi$, following inequality holds:
\begin{equation}
  |\mathbf{P}(\varphi)| \le R_0 := \max\{|\mathbf{P}(\varphi_0)|,\, a\} 
  \qquad \text{for all } \varphi \ge \varphi_0.
\end{equation}
Combining this with $|\mathbf{E}(\varphi)| \le a$ via the triangle inequality, 
we obtain
\begin{equation}
  \rho(\varphi) = |\mathbf{E}(\varphi) - \mathbf{P}(\varphi)| 
  \le |\mathbf{E}(\varphi)| + |\mathbf{P}(\varphi)| 
  \le a + R_0 =: N',
\end{equation}
which provides a uniform upper bound for $\rho$, independent of 
$\varphi$ and depending only on the initial pursuer position. 
Equivalently, $|z(\varphi)| = e^{\mu(\varphi)} \le N'$ for all 
$\varphi \ge \varphi_0$.

Combining the lower bound $\rho \ge r$ established above with this 
upper bound, the compact annulus
\begin{equation}
  \mathcal{A} := \{z \in \mathbb{C} : r \le |z| \le N'\}
\end{equation}
is forward-invariant under the flow of \eqref{equation-rev-2}.

\textbf{Step 5: Existence of a $\pi$-periodic solution via the Poincar\'e map.}
Since $f$ is $\pi$-periodic, equation~\eqref{equation-rev-2} can be lifted to an autonomous system on the extended phase space $\mathbb{C} \times S^1$, with $S^1 = \mathbb{R}/\pi\mathbb{Z}$, by setting $d\varphi/d\varphi = 1$. The hypersurface
\begin{align}
  \Sigma_{\varphi_0} := \{(z, \varphi) \in \mathbb{C}\times S^1 :
                          \varphi = \varphi_0\} \cong \mathbb{C}
\end{align}
is transversal to the flow at every point, so the Poincaré (first-return) map associated with $\Sigma_{\varphi_0}$ is well-defined~\cite[§10.2]{Wiggins}. We identify the forward-invariant annulus $\mathcal{A}$ from Step 4 with a compact subset of $\Sigma_{\varphi_0} \cong \mathbb{C}$.

\textit{(a) Definition of the Poincar\'e map.}
For each $w\in\mathcal{A}$, let $z(\,\cdot\,;w)$ denote the unique solution of
\eqref{equation-rev-2} satisfying $z(\varphi_0;w)=w$. Define the Poincar\'e
map associated with the section $\{\varphi=\varphi_0\}$ by
\begin{align}
P\colon\mathcal{A}\to\mathcal{A},\qquad
P(w)\coloneqq z(\varphi_0+\pi;w).
\end{align}
We collect three standard facts about $P$:
\begin{enumerate}
\item[(P1)] $P$ is well-defined on $\mathcal{A}$ and maps $\mathcal{A}$ into
itself: this is precisely the forward-invariance proved in Step~4.
\item[(P2)] $P$ is continuous: this is the continuous dependence of ODE
solutions on initial data \cite[Section 11.2]{Wiggins}.
\item[(P3)] $P^k(w)=z(\varphi_0+k\pi;w)$ for every $k\in\mathbb{N}$. Indeed,
since $f$ is $\pi$-periodic, the time-shifted solution
$\varphi\mapsto z(\varphi+\pi;w)$ also solves \eqref{equation-rev-2}, so it
coincides with $z(\,\cdot\,;P(w))$ by uniqueness.
\end{enumerate}
The key correspondence is:
\begin{align}
P(w)=w\quad\Longleftrightarrow\quad z(\,\cdot\,;w)\text{ is a $\pi$-periodic
solution of \eqref{equation-rev-2}.}
\label{fixedpointequiv}
\end{align}
Thus, finding a $\pi$-periodic solution amounts to finding a fixed point of
$P$ in $\mathcal{A}$.

\textit{(b) Existence of $D_\infty$.}
Both $z(\cdot)$ and $z(\cdot+\pi)$ solve \eqref{equation-rev-2}, so the
computation of Step~2 applies and gives
\begin{align}
\frac{dD}{d\varphi}
=-\frac{2n}{f(\varphi)}\bigl(\rho(\varphi)+\rho(\varphi+\pi)\bigr)
\bigl(1-\cos(\zeta(\varphi+\pi)-\zeta(\varphi))\bigr)\le 0.
\label{dDdphi}
\end{align}
Hence $D$ is non-increasing and bounded below by $0$, so the limit
\begin{align}
D_\infty\coloneqq\lim_{\varphi\to\infty}D(\varphi)\ge 0
\end{align}
exists by the monotone convergence theorem.

\textit{(c) Subsequence extraction along the Poincar\'e iterates.}
Fix any solution $z(\,\cdot\,)$ of \eqref{equation-rev-2}; by Step~4 we may
assume $z(\varphi_0)\in\mathcal{A}$. Consider the orbit of the Poincar\'e
map starting from $z(\varphi_0)$:
\begin{align}
z(\varphi_0+k\pi)=P^k(z(\varphi_0))\in\mathcal{A},\quad k=0,1,2,\dots
\end{align}
(the equality is from (P3)). Since $\mathcal{A}$ is compact, the
Bolzano--Weierstrass theorem yields a subsequence $k_j\to\infty$ and a point
$z^\star\in\mathcal{A}$ such that
\begin{align}
P^{k_j}(z(\varphi_0))\longrightarrow z^\star.
\end{align}
By the continuity of $P$ (P2),
\begin{align}
P^{k_j+1}(z(\varphi_0))=P\bigl(P^{k_j}(z(\varphi_0))\bigr)
\longrightarrow P(z^\star),
\quad
P^{k_j+2}(z(\varphi_0))\longrightarrow P^2(z^\star).
\end{align}

\textit{(d) Identification of two limit values of $D$.}
Using (P3), $D$ at the Poincar\'e times reads
\begin{align}
D(\varphi_0+k\pi)
=\bigl|P^{k+1}(z(\varphi_0))-P^k(z(\varphi_0))\bigr|^2.
\end{align}
Evaluating along the subsequence $k=k_j$ and $k=k_j+1$,
\begin{align}
D(\varphi_0+k_j\pi)
&\longrightarrow|P(z^\star)-z^\star|^2,
\label{Dlimit1}\\
D(\varphi_0+(k_j+1)\pi)
&\longrightarrow|P^2(z^\star)-P(z^\star)|^2.
\label{Dlimit2}
\end{align}
Since $D$ is monotone non-increasing with limit $D_\infty$, both left-hand
sides also converge to $D_\infty$. Equating limits,
\begin{align}
|P(z^\star)-z^\star|^2=|P^2(z^\star)-P(z^\star)|^2=D_\infty.
\label{twovalues}
\end{align}

\textit{(e) Step~3 forces $z^\star$ to be a fixed point of $P$.}
Let $z^*\coloneqq z(\,\cdot\,;z^\star)$ be the solution starting from
$z^\star$, and let $\tilde z^*(\varphi)\coloneqq z^*(\varphi+\pi)$, which is
again a solution of \eqref{equation-rev-2} with
$\tilde z^*(\varphi_0)=z^*(\varphi_0+\pi)=P(z^\star)$ and
$\tilde z^*(\varphi_0+\pi)=z^*(\varphi_0+2\pi)=P^2(z^\star)$ by (P3).
Consider $L^*(\varphi)\coloneqq|\tilde z^*(\varphi)-z^*(\varphi)|^2$.
Reading off \eqref{twovalues},
\begin{align}
L^*(\varphi_0)=|P(z^\star)-z^\star|^2=D_\infty,
\quad
L^*(\varphi_0+\pi)=|P^2(z^\star)-P(z^\star)|^2=D_\infty.
\end{align}
Hence $L^*(\varphi_0+\pi)=L^*(\varphi_0)$. By the strict-decrease property
\eqref{strict} of Step~3, this is possible only when
$z^*\equiv\tilde z^*$, i.e.\ $z^*$ is $\pi$-periodic; equivalently, by
\eqref{fixedpointequiv}, $P(z^\star)=z^\star$. From the equation $L^*(\varphi_0)=
|P(z^\star)-z^\star|^2$ we then read off $D_\infty=0$.

By the monotonicity of $D$, the conclusion $D_\infty=0$ promotes from the
subsequence to the full limit: $D(\varphi)\to 0$ as $\varphi\to\infty$, i.e.\
$|z(\varphi+\pi)-z(\varphi)|\to 0$ for every $\varphi$.

\textbf{Step 6: Uniqueness.}
If $z_1^*$ and $z_2^*$ are both $\pi$-periodic, periodicity gives
$L(\varphi_0+\pi)=L(\varphi_0)$, while \eqref{strict} forces
$L(\varphi_0+\pi)<L(\varphi_0)$ as long as $z_1^*\not\equiv z_2^*$. Hence the
periodic solution is unique. Equivalently, $P$ has a unique fixed point in
$\mathcal{A}$.

\textbf{Step 7: Global asymptotic convergence.}
Let $\psi$ be any solution of \eqref{equation-rev-2} and $z^*$ the unique
$\pi$-periodic solution from Step~5. After a finite transient both $\psi$
and $z^*$ lie in $\mathcal{A}$ by Step~4, and we may assume
$\psi(\varphi_0),z^*(\varphi_0)\in\mathcal{A}$. Set
\begin{align}
L(\varphi)\coloneqq|\psi(\varphi)-z^*(\varphi)|^2,
\end{align}
which by \eqref{dLdphi} is non-increasing and bounded below by $0$, hence
$L(\varphi)\to L_\infty\ge 0$.

We apply the same Poincar\'e-map argument as in Step~5(c)--(e). Let
$z^\star\coloneqq z^*(\varphi_0)$, which by Step~5 satisfies $P(z^\star)=
z^\star$. Pick a subsequence $k_j\to\infty$ along which
$P^{k_j}(\psi(\varphi_0))\to\psi^\star\in\mathcal{A}$ (Bolzano--Weierstrass).
By continuity of $P$,
\begin{align}
P^{k_j+1}(\psi(\varphi_0))\longrightarrow P(\psi^\star).
\end{align}
Using (P3) and $P^k(z^\star)=z^\star$ for all $k$,
\begin{align}
L(\varphi_0+k_j\pi)
&=|P^{k_j}(\psi(\varphi_0))-z^\star|^2
\;\longrightarrow\;|\psi^\star-z^\star|^2,\\
L(\varphi_0+(k_j+1)\pi)
&=|P^{k_j+1}(\psi(\varphi_0))-z^\star|^2
\;\longrightarrow\;|P(\psi^\star)-z^\star|^2.
\end{align}
Both left-hand sides also tend to $L_\infty$, so
\begin{align}
|\psi^\star-z^\star|^2=|P(\psi^\star)-z^\star|^2=L_\infty.
\label{Lcommon}
\end{align}
Let $\bar\psi\coloneqq z(\,\cdot\,;\psi^\star)$. Then
$\bar\psi(\varphi_0)=\psi^\star$ and $\bar\psi(\varphi_0+\pi)=P(\psi^\star)$,
while $z^*(\varphi_0)=z^*(\varphi_0+\pi)=z^\star$ by $\pi$-periodicity.
Equation \eqref{Lcommon} thus says that the squared distance between
$\bar\psi$ and $z^*$ takes the same value $L_\infty$ at $\varphi_0$ and at
$\varphi_0+\pi$. By Step~3 this forces $\bar\psi\equiv z^*$, hence
$\psi^\star=z^\star$ and $L_\infty=0$.

Since $L$ is monotone non-increasing with $L_\infty=0$, we have
$L(\varphi)\to 0$ as $\varphi\to\infty$ in the usual sense (not merely along
the Poincar\'e iterates). Equivalently,
$|\psi(\varphi)-z^*(\varphi)|\to 0$ as $\varphi\to\infty$, completing the
proof.
\end{proof}

\section{Verification of Simulation Results}\label{sec:verification}
This section verifies that the theorems proved in Section~\ref{sec:analysis} are consistent with the numerical simulations presented in Sections~\ref{sec:circular} and~\ref{sec:elliptical}. In every example below, the initial conditions are $\rho(\varphi_0)=1$ and $\zeta(\varphi_0)=\pi/2$, corresponding to the evader starting at $(1,0)$ and the pursuer at the origin. In particular $\mu(\varphi_0)=\log\rho(\varphi_0)=0$, so $e^{\mu(\varphi_0)}=1$ in every numerical example.

\textbf{Case 1: Finite-time capture for $n=1.2$ (circular case).}
For the circular pursuit and evasion in Fig.~\ref{Fig-Dynamics-circle-1.2}, we have $a=b=1$ and $n=1.2$, and the pursuer captures the evader at $t=1.676$. Since the evader's parameter $t$ and the variable $\varphi$ are related by $\varphi=t+\pi/2$, the blow-up time satisfies $\varphi_B-\varphi_0=1.676$. The upper bound given by Theorem~\ref{thm:Finite-time} is
\begin{align}
\varphi_B-\varphi_0 \;\le\; \frac{b\,e^{\mu(\varphi_0)}}{a^2(n-1)}=\frac{1\cdot 1}{1^2\cdot 0.2}=5.
\end{align}
The measured value $1.676$ is well within the theoretical upper bound $5$, confirming the theorem in the circular case.

\textbf{Case 2: Finite-time capture for $n=1.2$ (elliptical case).}
For the elliptical pursuit and evasion in Fig.~\ref{Fig-Dynamics-ellipse-1.2}, we have $a=1$, $b=0.5$, and $n=1.2$. As stated in Section~6, the pursuer captures the evader at $\varphi=3.151$, hence
\begin{align}
\varphi_B-\varphi_0=3.151-\frac{\pi}{2}\approx 1.580.
\end{align}
The upper bound from Theorem~\ref{thm:Finite-time} is
\begin{align}
\varphi_B-\varphi_0 \;\le\; \frac{b\,e^{\mu(\varphi_0)}}{a^2(n-1)}=\frac{0.5\cdot 1}{1^2\cdot 0.2}=2.5.
\end{align}
Again, the numerical value $1.580$ is consistent with the theoretical bound $2.5$. Note that the elliptical bound is tighter than the circular one because $b<a$: the upper bound $be^{\mu(\varphi_0)}/[a^2(n-1)]$ is proportional to $b$, so a smaller minor axis yields a smaller (i.e.\ tighter) upper bound for the capture time.

\textbf{Case 3: Convergence to a $\pi$-periodic solution for $n=0.5$.}
We next consider the subcritical case $0<n<1$. For circular pursuit with $n=0.5$ (Fig.~\ref{Fig-Dynamics-circle-0.5}), the solution trajectory terminates at the equilibrium point
\begin{align}
(\rho^*,\zeta^*)=\bigl(\sqrt{1-n^2},\,\cos^{-1}n\bigr)=\bigl(\sqrt{0.75},\,\pi/3\bigr)\approx(0.866,\,1.047).
\end{align}
Because this point is constant in $\varphi$, it is a (trivial) $\pi$-periodic solution of \eqref{equation-rev}, in agreement with Theorem~\ref{thm:stability}.

For elliptical pursuit with $n=0.5$ (Fig.~\ref{Fig-Dynamics-ellipse-0.5}), the solution trajectory converges to a non-trivial closed curve. Numerically, the $\rho$-coordinate along this curve is confined to an interval of the form roughly $0.4\lesssim\rho\lesssim 0.8$ in this example. This data shows $\rho$ is bounded away from $0$ and from $\infty$, which is consistent with the existence of the forward-invariant annulus $\mathcal{A}=\{r\le |z|=\rho\le N'\}$ constructed in Step~4 of the proof of Theorem~\ref{thm:stability}. In this simulation, $|\mathbf{P}(\varphi_0)|=0$ and $a=1.0$ then $N' = 2a= 2.0$. Therefore $|z|\le N'$ holds. On the other hand, we have only proven the existence of $r$, so we cannot numerically verify the lower bound $r \le |z|$.

\textbf{Case 4: Numerical verification of uniform convergence of $z(\varphi)$.}
Theorem~\ref{thm:stability} asserts not only the existence of a $\pi$-periodic solution of \eqref{equation-rev}, but also that every other solution converges to it globally and uniformly in $\varphi$. To verify this stronger claim directly, we integrated the dynamical system \eqref{equation-rev} for $a=1$, $b=0.5$, $n=0.5$ on the interval $\varphi\in[\pi/2,\,\pi/2+20\pi]$ from four different initial conditions at $\varphi_0=\pi/2$:
\begin{align*}
&\text{Pursuer 1: }(\mu(\varphi_0),\zeta(\varphi_0))=(\log a,\,\pi/2),\\
&\text{Pursuer 2: }(\mu(\varphi_0),\zeta(\varphi_0))=(1.0,\,0.1),\\
&\text{Pursuer 3: }(\mu(\varphi_0),\zeta(\varphi_0))=(-0.5,\,\pi/2),\\
&\text{Pursuer 4: }(\mu(\varphi_0),\zeta(\varphi_0))=(-0.7,\,-\pi/2).
\end{align*}
Figure~\ref{Fig-mu-convergence} shows $\mu(\varphi)$ for all four initial conditions plotted on the same axes. After a short transient of at most a few periods, the four curves become visually indistinguishable and settle into a common $\pi$-periodic oscillation, confirming that $\mu(\varphi)$ converges uniformly to a single limit independent of its initial value.

Figure~\ref{Fig-zeta-convergence} shows the corresponding plots of $\zeta(\varphi)$. In this case the four trajectories converge to the same limit only up to an additive integer multiple of $2\pi$: in particular Pursuer~2, whose initial $\zeta$ is close to $0$, winds once more than the others and settles near a branch offset by $+2\pi$. Plotting $\zeta_2(\varphi)-2\pi$ on the same axes (dashed curve in Fig.~\ref{Fig-zeta-convergence}) makes it coincide with the limiting curve shared by Pursuers 1, 3, and 4. This $2\pi$-indeterminacy is harmless, because $\sin\zeta$ and $\cos\zeta$ are invariant under $\zeta\mapsto\zeta+2k\pi$; hence the complex variable
\begin{align}
z(\varphi)=e^{\mu(\varphi)+i\zeta(\varphi)}=e^{\mu(\varphi)}\bigl(\cos\zeta(\varphi)+i\sin\zeta(\varphi)\bigr)
\end{align}
does converge uniformly in $\varphi$ to one and the same $\pi$-periodic orbit for every initial condition.

\begin{figure}[h]
\centering
\includegraphics[width=0.62\columnwidth]{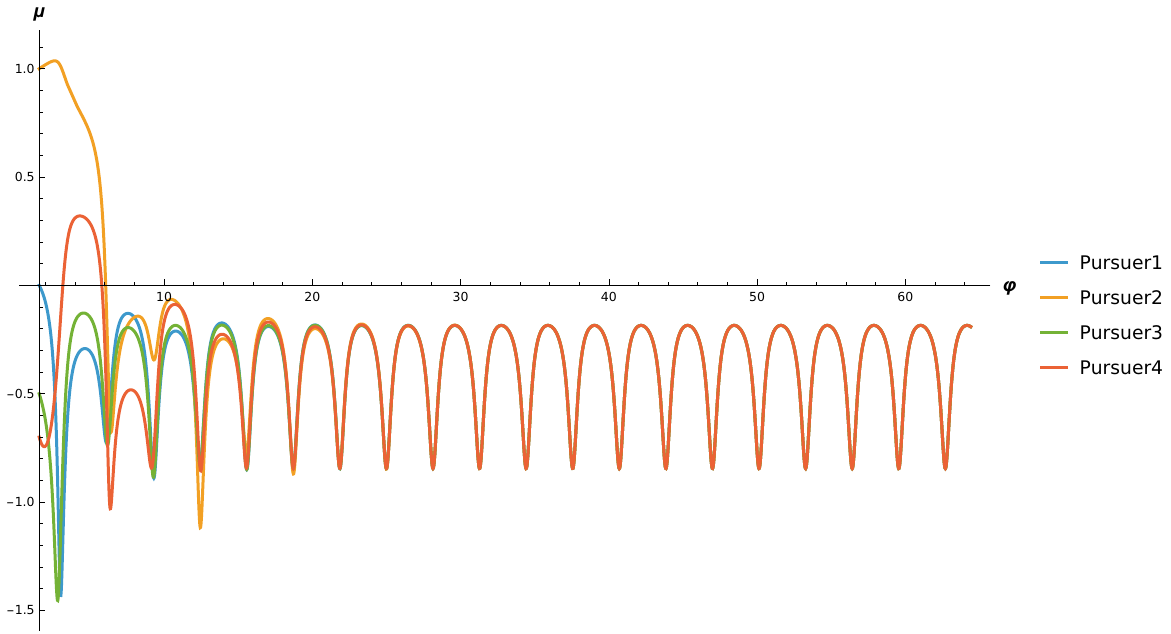}
\caption{Time evolution of $\mu(\varphi)$ for the four initial conditions of Pursuers 1--4, under the elliptical dynamical system \eqref{equation-rev} with $a=1$, $b=0.5$, $n=0.5$. After a short transient the four curves become indistinguishable, confirming uniform convergence of $\mu$ to a common $\pi$-periodic limit}
\label{Fig-mu-convergence}
\end{figure}

\begin{figure}[h]
\centering
\includegraphics[width=0.62\columnwidth]{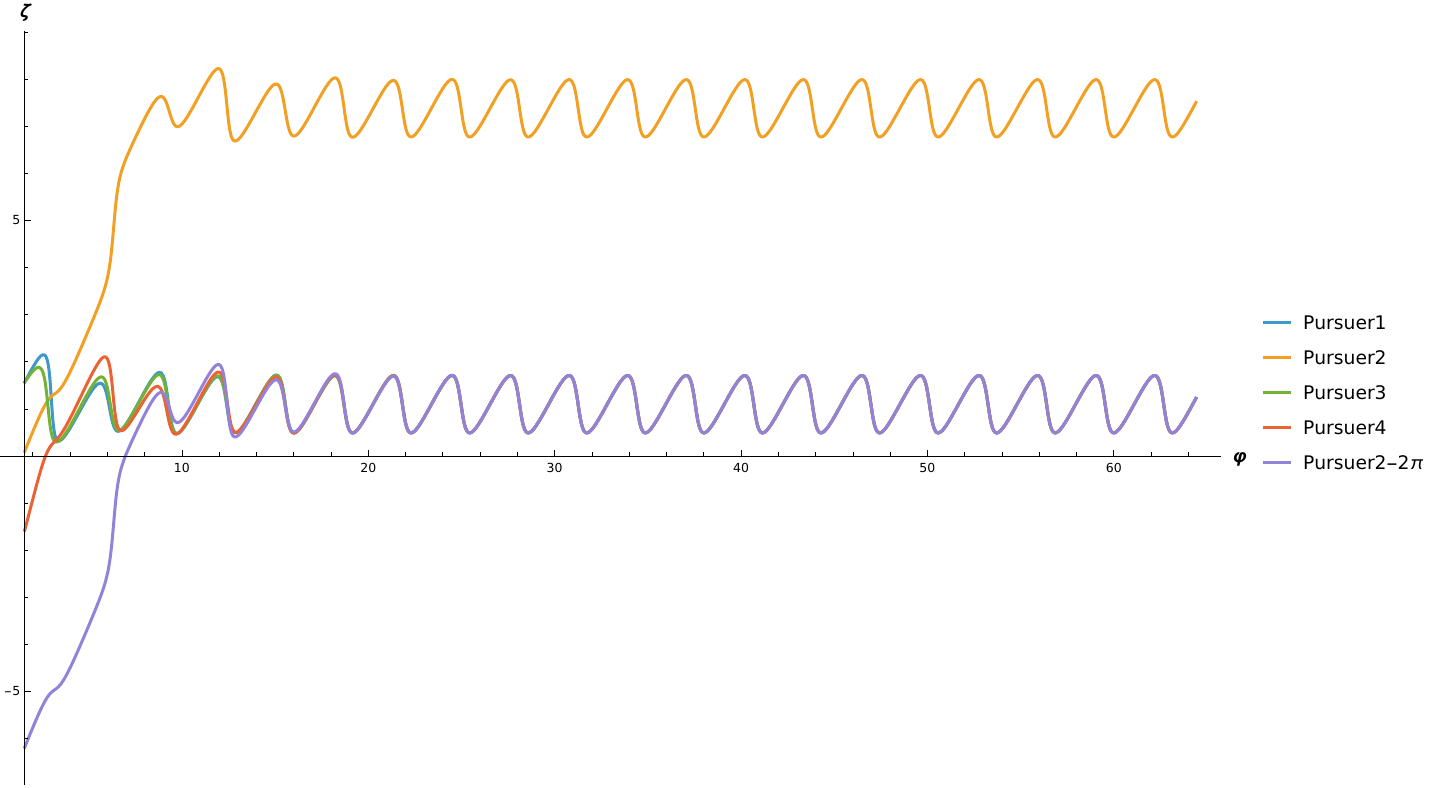}
\caption{Time evolution of $\zeta(\varphi)$ for the same four initial conditions as in Fig.~\ref{Fig-mu-convergence}. Pursuers 1, 3, 4 converge to a common limit, while Pursuer 2 converges to the same limit shifted by $+2\pi$. Subtracting $2\pi$ from $\zeta_2$ (dashed curve) makes all four trajectories coincide after the transient, so $\sin\zeta$ and $\cos\zeta$ (and hence $z=e^{\mu+i\zeta}$) converge uniformly}
\label{Fig-zeta-convergence}
\end{figure}

Translated back to physical $(x,y)$-coordinates, this uniform convergence of $z$ implies that all four pursuer trajectories are attracted onto the same closed curve inside the evader's ellipse. Figure~\ref{Fig-trajectory-convergence} shows the last single revolution of the simulation ($\varphi\in[18\pi+\pi/2,\,20\pi+\pi/2]$): the four pursuer trajectories have merged into a single closed orbit, independent of the initial separation $\mu(\varphi_0)$ and initial angle $\zeta(\varphi_0)$. This numerical experiment therefore provides strong direct evidence for the uniqueness and global asymptotic stability of the $\pi$-periodic solution established in Theorem~\ref{thm:stability}.

\begin{figure}[h]
\centering
\includegraphics[width=0.62\columnwidth]{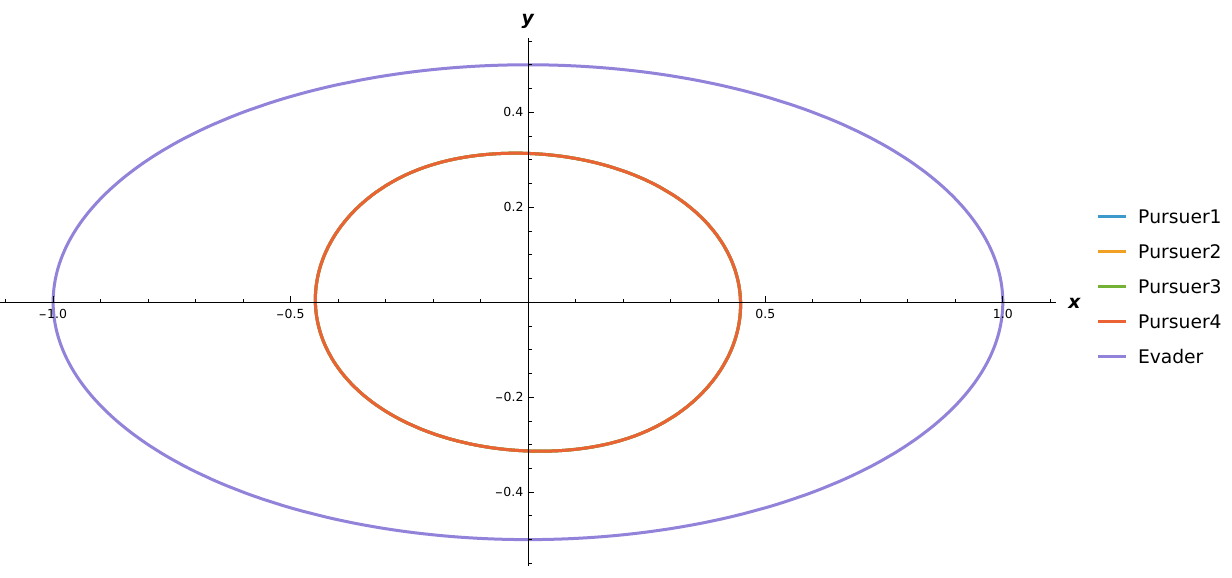}
\caption{Pursuer trajectories reconstructed from $(\mu,\zeta)$ for the four initial conditions, plotted over the final revolution $\varphi\in[18\pi+\pi/2,\,20\pi+\pi/2]$ of the simulation. All four pursuer trajectories have collapsed onto the same closed curve inside the evader's ellipse, providing direct physical evidence of global asymptotic convergence to the unique $\pi$-periodic solution}
\label{Fig-trajectory-convergence}
\end{figure}

Taken together, these comparisons show that the simulations of Sections~\ref{sec:circular} and~\ref{sec:elliptical} are in quantitative agreement with both the finite-time blow-up bound for $n>1$ and the existence of a unique, globally attracting $\pi$-periodic solution for $0<n<1$ established in Section~\ref{sec:analysis}.

\section{Conclusion}
This paper examined the pursuit-evasion problem from the perspective of dynamical systems. The derivation of the dynamical system is possible because the parametrization of the evader does not change the shape of the pursuer's trajectory. The key difference between circular and elliptical pursuit–evasion lies in whether the resulting dynamical system is autonomous and in the qualitative structure of its solution trajectories. By reformulating the distance between the two players as an exponential function and analyzing its exponent — the logarithmic distance — we derived an upper bound for the capture time when $n>1$. Moreover, we derived a single complex ODE from this dynamical system and used it to show that the squared distance between any two solutions and the period-shift of a single solution converge to zero.

\section{Future Work}
There are three areas for improvement in the results established here. First, the upper bound for the capture time given by Theorem~\ref{thm:Finite-time} for $n>1$ is more than twice the numerically observed value. Tightening this estimate, for instance by exploiting the oscillatory structure of $f(\varphi)$, remains an open problem. Second, we have not theoretically proven that capture is impossible when $n=1$. The lower-bound argument can be derived as follows in the same way as the proof of Theorem~\ref{thm:Finite-time}:
\begin{equation}
    \varphi_B - \varphi_0 \ge \frac{1}{M(\varphi_0) f_{\max} (n +1)}.
\end{equation}
The right-hand side does not diverge as $n\to 1$, so a different approach is needed. Finally, the lower bound $r>0$ of the invariant annulus $\mathcal{A}$ in Theorem~\ref{thm:stability} has not been computed explicitly. While the proof guarantees $r>0$, determining its precise value as a function of $a$, $b$, and $n$ is left for future work.

The theorems presented here hold whenever $f$ is periodic and bounded. This generality suggests potential extensions to pursuit problems in which the evader follows more general closed curves (such as those studied in \cite{Azamov,Kuchkarov}) with an eye toward characterizing the long-term shape of the pursuer's trajectory, rather than merely the feasibility of capture. Such extensions would be relevant to tracking objects moving along diverse closed trajectories, particularly orbiting bodies.

\section*{Acknowledgements}
This paper was completed under the guidance of Professor Toru Ohira of the Graduate School of Mathematics, Nagoya University. 

The author used Claude Opus 4.7 (Anthropic) to discuss proof strategies for Theorems \ref{thm:Finite-time} and \ref{thm:stability}. All mathematical arguments were independently verified and rewritten by the author, who takes full responsibility for the correctness of the proofs and the final content of this manuscript.

\section*{Declarations}
\begin{itemize}
\item Funding: This work was financially supported by JST SPRING, Grant Number JPMJSP2125.
\item Competing interests: The author has no competing interests to declare that are relevant to the content of this article.
\end{itemize}


\end{document}